%% file: main.tex
\documentclass[11pt]{article}

\usepackage[margin=1in]{geometry}
\usepackage[T1]{fontenc}
\usepackage[utf8]{inputenc}
\usepackage{amsmath,amssymb,amsthm,mathrsfs}
\usepackage{graphicx}
\usepackage{url}

\newtheorem{thm}{Theorem}
\newtheorem{lem}[thm]{Lemma}
\newtheorem{prop}[thm]{Proposition}
\newtheorem{cor}[thm]{Corollary}
\theoremstyle{remark}
\newtheorem{rem}[thm]{Remark}

\usepackage[hidelinks]{hyperref}
\hypersetup{
  pdftitle={Exact Decision and a Surjective Stabilizer Atlas for Affine Coprime-Factor Segments},
  pdfauthor={Junkai Qiu},
  pdfsubject={Simultaneous stabilization and rational matrix interpolation},
  pdfkeywords={simultaneous stabilization, affine coprime-factor segment, exact decision, double-Cayley factorization, surjective atlas}
}

\begin{document}

\title{Exact Decision and a Surjective Stabilizer Atlas for Affine Coprime-Factor Segments}

\author{Junkai Qiu\\
School of Mathematical Sciences\\
Dalian University of Technology\\
Dalian 116024, China\\
\texttt{qjk@mail.dlut.edu.cn}}

\date{}

\maketitle

\begin{abstract}

We consider simultaneous internal stabilization of square proper
real-rational plants of arbitrary but fixed finite input-output dimension
$n$, arranged as an affine right-coprime factor segment, under explicit
nondegeneracy hypotheses.  After a classical normalization, the remaining
global spectral-cut constraint and the reconstruction constraints at finite
poles and at infinity are encoded, without loss, as a finite-dimensional
semialgebraic seed, using a function-level double-Cayley factorization.
For input coefficients in an effectively presented real closed field,
that seed yields an exact existence decision that does not enumerate
controller McMillan degree; the decision engine is classical quantifier
elimination.  The same encoding, now ranging over rational radii, admissible
seeds, and residual unimodular and rational Schur parameters, yields a
surjective atlas of all proper real-rational common stabilizers of the
segment.  The results concern this
structured class and are compatible with the rational undecidability of
simultaneous stabilization of three general plants.
\end{abstract}

\noindent\textbf{Keywords.} Simultaneous stabilization; affine coprime-factor segment;
exact decision; double-Cayley factorization; surjective atlas.

\noindent\textbf{MSC 2020.} Primary 93D15; Secondary 93B25, 93B40, 14P10.

\section{Introduction}

This paper studies one structured simultaneous-stabilization problem: a
continuous affine segment of square proper real-rational plants of
arbitrary but fixed finite input-output dimension $n$, represented by
stable right-coprime factor pairs.  Under explicit nondegeneracy
hypotheses, the questions are whether a proper real-rational common
internal stabilizer exists and how all such controllers may be described
without an a priori bound on McMillan degree.

Coprime-factor methods for multivariable plants are classical
\cite{Youla1974single}.  Residual-unimodularity parameterizations of
simultaneous stabilizers are given in
\cite{Saeks1982fractional,Vidyasagar1982algebraic}.
Generic simultaneous pole-placement results are available as well
\cite{GhoshByrnes1983simultaneous}.
Characterizations of the full stabilizer set are available, yet the free
parameter remains subject to a unimodularity constraint that is difficult
to resolve constructively \cite{Obinata1988characterization}.  The present
paper does not invent that parameterization; it asks when the residual
unimodular constraint can be resolved on this structured continuous
family.  Special plant classes yield useful positive constructions
\cite{Chen1995simultaneous,Gundes2001simultaneously}.  For three general
plants, simultaneous stabilization is rationally undecidable
\cite{BlondelGevers1993undecidable}, related semialgebraic limitations are
known \cite{Bertilsson1995semialgebraic}, and conditions on the positive
real axis do not suffice \cite{Blondel1994simultaneous}.
None of these facts is claimed as a contribution below.

A second line treats affine and polytopic families by spectral cuts and
interpolation on factor segments.  Finite vertex reductions and sufficient
conditions are known for affine characteristic-polynomial families and for
polytopic plants
\cite{Djaferis1991stabilize,Fonte2019stabilization,Fonte2025sufficient},
and avoidance of the negative real axis already appears for
single-input/single-output (SISO) segments \cite{Fonte2008conditions}.
Analytic interpolation proceeds from Ghosh \cite{Ghosh1986transcendental}
to a parameterized SISO construction that includes derivative data
\cite{Cui2023parameterized}.  The closest recent multi-input/multi-output
(MIMO) treatments of affine factor segments
\cite{Cui2024multivariable,Cui2024general} work under simple zeros,
rank-one adjugate data, and a single matrix-square-root interpolation.
Their parameterization describes all solutions of that interpolation
problem; it does not automatically describe all proper real-rational
common stabilizers of the segment, and it does not treat the confluent
Smith jets generated by repeated zeros.

The gap addressed here is how to treat simultaneously the global
spectral-cut constraint, the complete multiple-zero reconstruction data,
properness at infinity, and real-rational solutions of arbitrary degree,
without sacrificing existence or completeness of the solution set.
Pointwise accretive products, matrix square roots, commuting Lyapunov
metrics, rational spectral factors, and confluent matrix interpolation are
classical instruments
\cite{Ballantine1975accretive,Wu1989operator,Higham2008functions,Narendra1994common,Baggio2016factorization,Ball2018bitangential};
they are used below and are not claimed as contributions.  The increment
is their function-level assembly into a lossless finite seed for the
stated segment, and a covering of every proper real-rational common
stabilizer that the seed determines.

Related work on exact stabilization certificates uses numerical algebraic
geometry \cite{Menini2025exact}.  Strongly $F$-positive real interpolation
\cite{Stefanovski2023interpolation} and bistable tangential interpolation
\cite{Stefanovski2026bistable} address strong stabilization, which also
requires the controller itself to be stable.  These results provide related
algebraic and interpolation constructions.  The question here is the
lossless finite decision and full stabilizer set for the stated affine
factor segment, without imposing stability on the controller itself.

The paper makes three contributions.

\begin{enumerate}
\item \emph{Function-level double-Cayley factorization.}
A stable real-rational matrix function avoids the spectral cut
$(-\infty,0]$ throughout the closed stable region if and only if, up to
unimodular similarity over the stable rational ring, it is the product of
two Cayley transforms of strict rational Schur functions.  The increment
is the global identity that preserves real rationality, stability,
stably invertible similarity, and consistency on the whole closed stable
region, rather than a pointwise matrix factorization.

\item \emph{Lossless finite seed and exact decision.}
For a nondegenerate affine right-coprime factor segment of size $n$, the
spectral-cut constraint and the complete reconstruction
constraints---including repeated Smith indices, one additional Taylor
coefficient at infinity, and the data needed for a unimodular Hermite
lift---are encoded without loss as a finite-dimensional semialgebraic
seed.  Every admissible controller, of any McMillan degree, compresses to
a seed of this format, and every acceptable seed lifts to a legal
controller.  Over an effectively presented real closed coefficient field,
classical quantifier elimination applied to the seed yields a terminating
exact existence decision that does not enumerate controller McMillan
degree.  After a positive sample, certain auxiliary constructions in the
lift remain terminating enumerations; they are not a search over the
McMillan degree of the controller.

\item \emph{Surjective atlas of all common stabilizers.}
Ranging over admissible seeds of that encoding, a surjective atlas
produces every proper real-rational common stabilizer of the segment.
The free parameters include the rational radius and admissible seed, the
residual unimodular Hermite freedom, not merely a neighborhood of the
identity, together with rational Schur interpolants.  The atlas is
surjective and need not be unique; it is a constraint-resolved covering of
the stated segment, not an unconstrained Youla parameter.
\end{enumerate}

An explicit example shows that the subclass of common stabilizers whose
normalized matrix admits a strictly accretive real-rational square root is
nonempty and strictly smaller than the full set.  That comparison is
recorded as evidence that two Cayley factors are used for completeness,
not as a further contribution, and it does not assert that a square-root
interpolation is empty on the same instance.

Throughout, $n$ is the input-output dimension of the square plants, not
the McMillan degree of the plants or of the controller, and affine
interpolation of coprime factors is not affine interpolation of transfer
matrices.  The theorems do not address general simultaneous
stabilization.  Blondel and Gevers established that simultaneous
stabilization of three general plants is rationally undecidable
\cite{BlondelGevers1993undecidable}; the present results are compatible
with that boundary by restriction of the plant family, not by a claim of
tractability for unrestricted instances.

\section{Problem formulation and the inherited reduction}

\subsection{The plant class and standing hypotheses}

Throughout, $I_n$ denotes the $n\times n$ identity matrix, $X^*$ the conjugate
transpose of $X$, $X^{-*}=(X^{-1})^*$, and $\sigma(X)$ the spectrum of $X$.
Inequalities between Hermitian matrices are understood in the Loewner order.
Put $\overline{\mathbb C}_+^{\,e}
=\{s\in\mathbb C:\operatorname{Re}s\geq0\}\cup\{\infty\}$, and let
$\mathcal H=\mathcal{RH}_\infty$ denote the ring of proper
real-rational functions analytic on $\overline{\mathbb C}_+^{\,e}$.  A square
matrix $F$ over $\mathcal H$ is \emph{unimodular} if its inverse also has
entries in $\mathcal H$; the unimodular group is denoted by
$\mathrm{GL}_n(\mathcal H)$.  Equivalently, $F\in\mathrm{GL}_n(\mathcal H)$ if
and only if $F\in\mathcal H^{n\times n}$ and $\det F$ has no zeros on
$\overline{\mathbb C}_+^{\,e}$.

Let $n\geq1$ be a fixed finite integer.  In this paper $n$ is the
input-output size of the plants, not their McMillan degree: the latter
remains unbounded a priori.  A pair
$(N,D)\in\mathcal H^{n\times n}\times\mathcal H^{n\times n}$ is
\emph{right coprime} over $\mathcal H$ if there exist
$X,Y\in\mathcal H^{n\times n}$ with $XN+YD=I_n$.  Consider stable
right-coprime pairs $(N_i,D_i)\in\mathcal H^{n\times n}\times
\mathcal H^{n\times n}$, $i=0,1$, and define
\begin{equation}
\begin{split}
N_\lambda&=(1-\lambda)N_0+\lambda N_1,\\
D_\lambda&=(1-\lambda)D_0+\lambda D_1,\\
P_\lambda&=N_\lambda D_\lambda^{-1},\qquad 0\leq\lambda\leq1.
\end{split}
\label{eq:plant-line}
\end{equation}
The interpolation is affine in the factors, not in the plants:
$P_\lambda$ need not equal $(1-\lambda)P_0+\lambda P_1$.

We recall three standard facts used throughout.  They are inherited and are
not contributions of the paper.
\begin{enumerate}
\item \emph{Internal stability.}
With the negative-feedback convention, a proper pair $(P,K)$ with
right-coprime plant factors $P=ND^{-1}$ and left-coprime controller factors
$K=D_c^{-1}N_c$ is internally stable if and only if
$\Delta=N_cN+D_cD$ lies in $\mathrm{GL}_n(\mathcal H)$
\cite{Vidyasagar1982algebraic}.
\item \emph{Properness of a left-coprime controller.}
If $K=D_c^{-1}N_c$ is a stable left-coprime factorization, then $K$ is
proper if and only if $D_c(\infty)$ is nonsingular
\cite{Vidyasagar1982algebraic}.
\item \emph{Constant spectral cut.}
For a constant matrix $A_0$, the pencil $(1-\lambda)I_n+\lambda A_0$ is
invertible for every $\lambda\in[0,1]$ if and only if
$\sigma(A_0)\cap(-\infty,0]=\emptyset$.
\end{enumerate}

The standing hypotheses are that every $P_\lambda$ is proper, every
$(N_\lambda,D_\lambda)$ is right coprime over $\mathcal H$, and
\begin{equation}
\mathcal M=
\begin{bmatrix}N_0&N_1\\D_0&D_1\end{bmatrix},
\qquad \det\mathcal M\not\equiv0.
\label{eq:M}
\end{equation}
The last condition means that $\mathcal M$ is invertible over the
real-rational function field; it does not require $\mathcal M^{-1}$ to have
entries in $\mathcal H$, and it does not prohibit zeros of $\det\mathcal M$
of any finite multiplicity in $\overline{\mathbb C}_+^{\,e}$.  Properness and
right coprimeness are assumed on the whole segment: they do not follow
automatically from the corresponding properties at the two endpoints.

\subsection{Normalized reduction}

Write a proper controller in a stable left-coprime form
$K=D_c^{-1}N_c$, where $N_c,D_c\in\mathcal H^{n\times n}$ and
$(D_c,N_c)$ is left coprime over $\mathcal H$.  A \emph{common stabilizer}
of \eqref{eq:plant-line} is a single proper real-rational controller that
internally stabilizes every $P_\lambda$; no stability requirement is imposed
on $K$ itself.  The standard coprime-factor criterion says that $K$
internally stabilizes $P_\lambda$ exactly when
\begin{equation*}
\Delta_\lambda=N_cN_\lambda+D_cD_\lambda
\in \mathrm{GL}_n(\mathcal H).
\end{equation*}
If $K$ stabilizes the segment, then $\Delta_0\in\mathrm{GL}_n(\mathcal H)$.
Left multiplication of $(N_c,D_c)$ by $\Delta_0^{-1}$ preserves $K$ and
normalizes $\Delta_0=I_n$.  With $A=\Delta_1$ this gives
\begin{equation}
[\,N_A\;D_A\,]=[\,I_n\;A\,]\mathcal M^{-1},
\qquad
\Delta_\lambda=(1-\lambda)I_n+\lambda A.
\label{eq:reconstruct}
\end{equation}
The inverse of $\mathcal M$ exists as a real-rational matrix by \eqref{eq:M};
the blocks $N_A,D_A\in\mathbb R(s)^{n\times n}$ are the unique solution of
$[\,N_A\;D_A\,]\mathcal M=[\,I_n\;A\,]$.

Define
\begin{equation}
\begin{split}
\mathfrak A_{\mathrm{adm}}^{(n)}=\{A\in\mathcal H^{n\times n}:\;&
[\,I_n\;A\,]\mathcal M^{-1}\in\mathcal H^{n\times 2n},\\
&\sigma(A(s))\cap(-\infty,0]=\emptyset
\text{ for all }s\in\overline{\mathbb C}_+^{\,e},\\
&\det D_A(\infty)\neq0\}.
\end{split}
\label{eq:Aadm}
\end{equation}
The spectral-cut condition is strictly weaker than pointwise strict
accretivity $A(s)+A(s)^*\succ0$, and it is not a condition on real
frequencies alone.

\begin{prop}[Normalized reduction]\label{prop:reduction}
There exists a proper real-rational controller simultaneously stabilizing
\eqref{eq:plant-line} if and only if $\mathfrak A_{\mathrm{adm}}^{(n)}$ is
nonempty.  Moreover,
\begin{equation}
K=D_A^{-1}N_A,
\qquad [\,N_A\;D_A\,]=[\,I_n\;A\,]\mathcal M^{-1},
\label{eq:KfromA}
\end{equation}
maps $\mathfrak A_{\mathrm{adm}}^{(n)}$ onto all proper real-rational common
stabilizers.  For each such controller $K$, the matrix $A$ is uniquely
determined: the normalization $\Delta_0=I_n$ removes the left unimodular
freedom of the controller factorization.  This uniqueness of $A$ on the
present plant class is not to be confused with uniqueness of the atlas
constructed in Section~\ref{sec:atlas}.
\end{prop}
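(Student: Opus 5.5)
The plan is to prove both directions through the three inherited facts, and to get uniqueness from the normalization.

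\emph{Necessity and surjectivity.} Let $K$ be a proper real-rational common stabilizer with a stable left-coprime factorization $K=D_c^{-1}N_c$. Internal stability of $P_0$ gives $\Delta_0\in\mathrm{GL}_n(\mathcal H)$. Replace $(N_c,D_c)$ by $(\Delta_0^{-1}N_c,\Delta_0^{-1}D_c)$. This pair is still stable and left coprime, because left multiplication by a unimodular matrix preserves a Bezout identity. It represents the same $K$, and now $\Delta_0=I_n$. Put $A=\Delta_1\in\mathcal H^{n\times n}$. Affinity of $(N_\lambda,D_\lambda)$ in $\lambda$ then gives $\Delta_\lambda=(1-\lambda)I_n+\lambda A$, and $[\,N_c\;D_c\,]\mathcal M=[\,I_n\;A\,]$. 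Since $\mathcal M$ is invertible over $\mathbb R(s)$, we get $[\,N_c\;D_c\,]=[\,I_n\;A\,]\mathcal M^{-1}$, so $(N_A,D_A)=(N_c,D_c)$ lies in $\mathcal H^{n\times 2n}$. Properness of $K$ with fact~2 gives $\det D_A(\infty)\neq0$.

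For the spectral cut, fix $s\in\overline{\mathbb C}_+^{\,e}$. Each $\Delta_\lambda$ is unimodular, so $\det\Delta_\lambda(s)\neq0$ for all $\lambda\in[0,1]$. Thus the constant pencil $(1-\lambda)I_n+\lambda A(s)$ is invertible on $[0,1]$, and fact~3 yields $\sigma(A(s))\cap(-\infty,0]=\emptyset$. Hence $A\in\mathfrak A_{\mathrm{adm}}^{(n)}$, and \eqref{eq:KfromA} returns $K$.

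\emph{Sufficiency.} Take $A\in\mathfrak A_{\mathrm{adm}}^{(n)}$ and set $(N_A,D_A)$ by \eqref{eq:reconstruct}; these blocks are stable by definition. First we need $D_A$ nonsingular and left coprimeness. Note $N_AN_0+D_AD_0=I_n$, so $D_A^{-1}$ need not be checked from this alone. However, $\det D_A(\infty)\neq0$ gives $\det D_A\not\equiv0$, so $K=D_A^{-1}N_A$ is a well-defined real-rational matrix. The identity $N_AN_0+D_AD_0=I_n$ with $N_0,D_0\in\mathcal H$ is itself a Bezout identity, so $(D_A,N_A)$ is left coprime. By fact~2, $K$ is proper.

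Next, $\Delta_\lambda=N_AN_\lambda+D_AD_\lambda=(1-\lambda)I_n+\lambda A$ has entries in $\mathcal H$. Its determinant has no zeros on $\overline{\mathbb C}_+^{\,e}$: at each such $s$, the spectral-cut condition and fact~3 make the pencil invertible. Hence $\Delta_\lambda\in\mathrm{GL}_n(\mathcal H)$. The standing hypotheses make $(N_\lambda,D_\lambda)$ a right-coprime factorization of the proper plant $P_\lambda$, so fact~1 applies and $K$ stabilizes every $P_\lambda$.

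\emph{Uniqueness.} Suppose two normalized left-coprime factorizations of the same $K$ both have $\Delta_0=I_n$. Left-coprime factorizations of $K$ differ by a left unimodular factor $U$, so $(N_c',D_c')=U(N_c,D_c)$. Then $\Delta_0'=U\Delta_0$ forces $U=I_n$, and therefore $A'=\Delta_1'=\Delta_1=A$.

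The step needing the most care is this uniqueness claim about left-coprime factorizations. It is standard, and follows from the two Bezout identities: $U=D_c'D_c^{-1}$ and its inverse both lie in $\mathcal H$. The remaining checks are routine bookkeeping, the main one being that properness of each $P_\lambda$ is assumed, so fact~1 applies along the whole segment.
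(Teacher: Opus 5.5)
Your proof is correct and follows the paper's argument. For surjectivity you normalize by $\Delta_0^{-1}$ and set $A=\Delta_1$; for legality you use the Bezout identity $N_AN_0+D_AD_0=I_n$, fact~2, and the constant spectral-cut computation, exactly as the paper does. The only difference is the uniqueness step: the paper reads off $A=(KN_0+D_0)^{-1}(KN_1+D_1)$ directly from $N_A=D_AK$ and $D_A(KN_0+D_0)=I_n$, while you compare two normalized left-coprime factorizations. Your version also works, and it is simpler than you suggest, since $U\Delta_0=\Delta_0=I_n$ already forces $U=I_n$ without any appeal to unimodularity of $U$.
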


\begin{proof}
Invoke the coprime-factor criterion
\cite{Vidyasagar1982algebraic}, the left-coprime
properness test that $D_c(\infty)$ is nonsingular
\cite{Vidyasagar1982algebraic}, and the constant
spectral-cut computation recalled before \eqref{eq:M}.  If
$A\in\mathfrak A_{\mathrm{adm}}^{(n)}$, then $D_A(\infty)$ is nonsingular, so
\eqref{eq:KfromA} is proper, while $N_AN_0+D_AD_0=I_n$ shows that the
reconstructed row is left coprime over $\mathcal H$.  Equation
\eqref{eq:reconstruct} holds.  For $0<\lambda\leq1$, a singularity of
$\Delta_\lambda(s)$ would place $-(1-\lambda)/\lambda$ in $\sigma(A(s))$,
contradicting the spectral-cut condition in \eqref{eq:Aadm}.  The case
$\lambda=0$ is $\Delta_0=I_n$.  Hence every $\Delta_\lambda$ lies in
$\mathrm{GL}_n(\mathcal H)$.

Conversely, left multiplication by $\Delta_0^{-1}$ preserves any common
stabilizer and normalizes $\Delta_0=I_n$.  Setting $A=\Delta_0^{-1}\Delta_1$
yields \eqref{eq:reconstruct} and, by the same three facts, places this $A$
in $\mathfrak A_{\mathrm{adm}}^{(n)}$.  Thus the map is onto.

The left unimodular freedom of a coprime controller factorization is killed
by the normalization $\Delta_0=I_n$.  Explicitly, $N_A=D_AK$ and
$D_A(KN_0+D_0)=I_n$ give
\[
A=(KN_0+D_0)^{-1}(KN_1+D_1),
\]
which depends only on $K$ and the fixed factors $(N_i,D_i)$.
\end{proof}

\subsection{Two constraints and a running example}

After Proposition~\ref{prop:reduction}, two constraints of different
character remain on $A\in\mathfrak A_{\mathrm{adm}}^{(n)}$.
\begin{enumerate}
\item \emph{Global spectral cut.}
The values of $A$ must avoid the nonpositive real axis throughout the closed
stability domain $\overline{\mathbb C}_+^{\,e}$.  This is a condition on the
range of $A$, not on its poles.  Section~\ref{sec:cayley} gives a global
rational factorization of this constraint.
\item \emph{Local reconstruction poles.}
The product $[\,I_n\;A\,]\mathcal M^{-1}$ must remain in
$\mathcal H^{n\times 2n}$, and $D_A(\infty)$ must be nonsingular.  Because
$\mathcal M^{-1}$ is merely rational, $A$ must cancel the poles of
$\mathcal M^{-1}$, including the point at infinity.
Section~\ref{sec:smith} encodes these conditions by finite Smith--Hermite
data.
\end{enumerate}
Neither constraint implies the other.  The continuous parameter $\lambda$ has
disappeared from the stability condition on the unknown, but the reduction
itself imposes no a priori bound on the McMillan degree of $A$ or of $K$.
The two constraints meet in a finite seed in
Section~\ref{sec:decision}.

To keep the subsequent constructions concrete, we record here the $2\times2$
running example used throughout the paper, as the case $n=2$ of
\eqref{eq:plant-line}.  Write $z=(1-s)/(1+s)$, which maps
$\overline{\mathbb C}_+^{\,e}$ onto the closed unit disk and sends
$s=\infty$ to $z=-1$.  In that coordinate, set
\begin{equation*}
\begin{split}
p(z)&=\frac{z^2(1+z)}8,\qquad
q(z)=-\frac{1+3z}8,\\
h(z)&=\frac{(1+z)^2}{64},\qquad
G(z)=\begin{bmatrix}p(z)&h(z)\\0&q(z)\end{bmatrix},\\
C(z)&=-4(1+z)I_2.
\end{split}
\end{equation*}
Take
\begin{equation*}
(N_0,D_0)=(0,I_2),\qquad
(N_1,D_1)=(G,I_2-CG).
\end{equation*}
For every $\lambda\in[0,1]$ one has $CN_\lambda+D_\lambda=I_2$, so the affine
factor pair remains right coprime, while $C(-1)=0$ forces every plant to be
proper.  A direct computation gives
$\det\mathcal M=\det G=-z^2(1+z)(1+3z)/64$.  In particular, $\det G$ has a
double zero at $z=0$ and a simple zero at $z=-1$, the latter representing
the point at infinity.  These zeros are the source of the local
reconstruction constraints; they are not treated in the present section.  A
complete exact certificate is deferred to Section~\ref{sec:examples}.

\section{Global rational factorization of the spectral-cut constraint}
\label{sec:cayley}

Proposition~\ref{prop:reduction} leaves two constraints of different type on
the normalized matrix $A$: a global spectral-cut condition on
$\overline{\mathbb C}_+^{\,e}$, and local reconstruction conditions at the
poles of an endpoint block inverse.  The present section treats the first
constraint, for every finite square size $n\geq1$, as a factorization problem
over the stable rational ring.  Only the spectral-cut condition is used here.

Use $z=(1-s)/(1+s)$ to map $\overline{\mathbb C}_+^{\,e}$ onto the closed unit
disk $\overline{\mathbb D}$, where $\mathbb D=\{z\in\mathbb C:\lvert z\rvert<1\}$;
the point $s=\infty$ corresponds to $z=-1$.  For a continuous-time transfer
matrix $F$, write $\widehat F(z)=F((1-z)/(1+z))$.  For a disk-domain rational
matrix $G$, write $G^\sim(z)=G(1/\bar z)^*$.  Let
$\mathcal R=\mathcal{RH}_\infty(\mathbb D)$ be the ring of real-rational
functions analytic on a neighborhood of $\overline{\mathbb D}$.  The bilinear
change of variable is a ring isomorphism $\mathcal H\cong\mathcal R$ that
preserves unimodularity.  Henceforth a matrix already viewed as an element of
$\mathcal R^{n\times n}$ is written without a hat.  The unimodular group
$\mathrm{GL}_n(\mathcal R)$ consists of those matrices in $\mathcal R^{n\times n}$
whose inverses also lie in $\mathcal R^{n\times n}$.  The norm
$\lVert\cdot\rVert_\infty$ is the matrix $H^\infty$ norm induced by the
Euclidean operator norm.  A square matrix $P$ is \emph{positive stable} if every
eigenvalue of $P$ has strictly positive real part, and \emph{strictly accretive}
if $P+P^*>0$.  Strict accretivity implies positive stability, but the converse
fails: the factorization below therefore inserts one unimodular similarity that
converts two commuting positive-stable factors into strictly accretive ones.

Define the Cayley transform by
\begin{equation}
\mathcal C(Q)=(I_n+Q)(I_n-Q)^{-1}.
\label{eq:car}
\end{equation}
A rational matrix $Q$ is Schur when $\lVert Q\rVert_\infty\leq1$ and strict
Schur when $\lVert Q\rVert_\infty<1$.  The elementary identity
\[
\mathcal C(Q)+\mathcal C(Q)^*
=2(I_n-Q^*)^{-1}(I_n-Q^*Q)(I_n-Q)^{-1}
\]
shows that $\mathcal C$ is a bijection between the strict rational Schur class
and the class of pointwise strictly accretive matrices in
$\mathcal R^{n\times n}$, with inverse
$Q=(P-I_n)(P+I_n)^{-1}$.  We use this correspondence as a standard identity,
not as a new lemma.

\begin{lem}[Standard accretive product]\label{lem:accretive-product}
If $P+P^*>0$ and $Q+Q^*>0$, then
$\sigma(PQ)\cap(-\infty,0]=\emptyset$
\cite{Ballantine1975accretive,Wu1989operator}.
The argument does not require $P$ and $Q$ to commute.
\end{lem}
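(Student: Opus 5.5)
The plan is to reduce the spectral statement about $PQ$ to a numerical-range statement about a single strictly accretive matrix, using a congruence.

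First, strict accretivity gives invertibility. If $Px=0$ with $x\neq0$, then $x^*(P+P^*)x=0$, which contradicts $P+P^*>0$. The same holds for $Q$. Hence $PQ$ is invertible and $0\notin\sigma(PQ)$.

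Next, suppose $PQx=\mu x$ with $x\neq0$ and $\mu$ real and negative. I will derive a contradiction by passing to $P^{-1}$, which is also strictly accretive. Indeed,
\[
P^{-1}+P^{-*}=P^{-1}(P+P^*)P^{-*}>0
\]
by congruence. Rewrite the eigen-equation as $Qx=\mu P^{-1}x$ and pair it with $x$:
\[
x^*Qx=\mu\, x^*P^{-1}x .
\]
Now take real parts. The left side has $\operatorname{Re}(x^*Qx)=\tfrac12x^*(Q+Q^*)x>0$. On the right, $\operatorname{Re}(x^*P^{-1}x)>0$ and $\mu<0$, so the real part is strictly negative. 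This is a contradiction, so no eigenvalue of $PQ$ lies in $(-\infty,0)$.

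Commutativity is never used in this argument, which confirms the remark in the statement. A slightly stronger conclusion also falls out. Both $x^*Qx$ and $x^*P^{-1}x$ lie in the open right half-plane, so their ratio $\mu$ has argument strictly less than $\pi$ in modulus. This is the Ballantine--Wu fact, and it is exactly what the cut $(-\infty,0]$ requires.

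There is no real obstacle here. The only step needing care is the choice to invert $P$ rather than attempting to symmetrize $PQ$. That choice reduces the claim to a single numerical-range comparison.
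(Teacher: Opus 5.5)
Your proof is correct and complete. It cannot really be compared with the paper's proof, because the paper gives no argument: it only cites Ballantine and Wu, where this inclusion is the easy half of a characterization of products of accretive matrices.

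Your route isolates exactly that half with an elementary numerical-range argument. Strict accretivity gives invertibility, and it passes to $P^{-1}$ through the congruence $P^{-1}(P+P^*)P^{-*}=P^{-1}+P^{-*}$. Then $Qx=\mu P^{-1}x$ yields $\mu=x^*Qx/x^*P^{-1}x$. This is a quotient of two numbers in the open right half-plane, so $\lvert\arg\mu\rvert<\pi$ and $\mu\neq0$.

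What this buys over the citation is self-containment and a transparent reason why commutativity plays no role. The argument is also purely pointwise, so it applies verbatim at each $z$ of the closed disk. That is exactly how the reverse inclusion of Theorem~\ref{thm:double-cayley}, and the legality half of the atlas theorem, use the lemma for $\mathcal C(Q_1(z))$ and $\mathcal C(Q_2(z))$.
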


\begin{proof}
See \cite{Ballantine1975accretive,Wu1989operator}.
\end{proof}

Write $\mathcal A_{\mathrm{cut}}^{(n)}$ for the class of all
$A\in\mathcal R^{n\times n}$ such that
$\sigma(A(z))\cap(-\infty,0]=\emptyset$ whenever $\lvert z\rvert\leq1$.
After the disk coordinate change, this is precisely the spectral-cut part of
$\mathfrak A_{\mathrm{adm}}^{(n)}$.

\begin{thm}[Double-Cayley factorization]\label{thm:double-cayley}
For every $n\geq1$,
\begin{equation}
\begin{split}
\mathcal A_{\mathrm{cut}}^{(n)}
&=\{U^{-1}\mathcal C(Q_1)\mathcal C(Q_2)U:\\
&\hspace{12mm}U\in\mathrm{GL}_n(\mathcal R),\\
&\hspace{12mm}\lVert Q_1\rVert_\infty<1,\
\lVert Q_2\rVert_\infty<1\}.
\end{split}
\label{eq:double-cayley}
\end{equation}
\end{thm}

\begin{proof}
The reverse inclusion is classical matrix algebra: if
$\lVert Q_k\rVert_\infty<1$, the Cayley identity after \eqref{eq:car} makes
each $\mathcal C(Q_k)$ strictly accretive, Lemma~\ref{lem:accretive-product}
keeps the product off the cut, and similarity preserves spectra.  No
restriction on $n$ enters.

The forward inclusion is the only original step.  Write $X=p(A)$ and
$Y=X^{-1}A$ by approximating the principal square root of the spectral
image of $A$ (Mergelyan \cite{Rudin1987real}; the square root itself need
not be rational \cite{Higham2008functions}).  The two factors commute and
are positive stable in $\mathcal R^{n\times n}$.  A common Lyapunov metric
$H$ for the pair is obtained by applying the commuting constant-matrix
construction of \cite{Narendra1994common} fiberwise on the circle, and is
real-rational by vectorization of the Lyapunov equation.  A rational spectral
factor $H=U^\sim U$ with $U,U^{-1}\in\mathcal R^{n\times n}$
\cite{Baggio2016factorization} then makes both $UXU^{-1}$ and $UYU^{-1}$
strictly accretive, so the inverse Cayley map yields the two strict Schur
factors.  The same $U$ serves both factors; they need not be equal.  None
of these citations depends on $n=2$.  The estimates that make $H$
rational and the two similar copies simultaneously accretive are recorded
in Appendix~\ref{app:double-cayley}.  The novelty is this simultaneous
real-rational, stable, and unimodular consistency, not the pointwise
accretive product.
\end{proof}

\begin{rem}
The two Schur factors in \eqref{eq:double-cayley} need not be equal, and the
sufficient inclusion does not require them to commute.  Consequently,
Theorem~\ref{thm:double-cayley} does not require a real-rational square root of
$A$.  A full example separating the present factorization from the single
strictly accretive real-rational square-root class is deferred to
Section~\ref{sec:examples}.
\end{rem}

\section{Complete finite data for reconstruction}
\label{sec:smith}

Proposition~\ref{prop:reduction} leaves two constraints of different type on
the normalized matrix $A$.  Theorem~\ref{thm:double-cayley} factorizes the
global spectral-cut condition.  The present section answers a different
question: before any global unimodular or Schur lift is attempted, which
finite data must be recorded so that the reconstruction
\eqref{eq:reconstruct} is analytic on the closed disk and the controller
$K=D_A^{-1}N_A$ is proper?  Only those local conditions are treated here.

Write $\widehat{\mathcal M}$ for the disk form of \eqref{eq:M}, and write
$\widehat A$ for the disk form of $A$.  Stability of $\widehat{\mathcal M}$
does not make its inverse stable.  The only candidate poles of
$[\,I_n\;\widehat A\,]\widehat{\mathcal M}^{-1}$ in $\overline{\mathbb D}$
are the zeros of $\det\widehat{\mathcal M}$, and properness is a condition
at $s=\infty$, which is the point $z=-1$.

\subsection{Nodes, local Smith chains, and pole cancellation}

The node set is
\begin{equation}
\mathcal Z=\{\zeta\in\overline{\mathbb D}:\det\widehat{\mathcal M}(\zeta)=0\}
\cup\{-1\}.
\label{eq:nodes}
\end{equation}
The point $-1$ is included only once.  The set is finite because a rational
determinant that is not identically zero has isolated zeros.  Boundary zeros
of $\det\widehat{\mathcal M}$ must be cancelled on the same footing as
interior zeros; it is not enough to retain only the zeros in the open disk.

At each distinct $\zeta_j\in\mathcal Z$, put $t=z-\zeta_j$.  The local Smith
form of an analytic matrix of full generic rank supplies analytic $L_j$ and
$R_j$, both invertible at $t=0$, such that
\begin{equation}
L_j(t)\widehat{\mathcal M}(\zeta_j+t)R_j(t)
=\operatorname{diag}(t^{\kappa_{j1}},\ldots,t^{\kappa_{j,2n}}),
\label{eq:smith}
\end{equation}
with $0\leq\kappa_{j1}\leq\cdots\leq\kappa_{j,2n}$.  Here
$\widehat{\mathcal M}$, $L_j$, and $R_j$ are $2n\times2n$.  Split the
$\ell$th column of $R_j$ into two $n\times1$ blocks,
\begin{equation}
R_j(t)e_\ell=
\begin{bmatrix}\nu_{j\ell}(t)\\v_{j\ell}(t)\end{bmatrix},
\qquad \nu_{j\ell},v_{j\ell}\in\mathbb C\{t\}^{n\times1},
\label{eq:root-chain}
\end{equation}
and write
$c_{j\ell}(t)=\nu_{j\ell}(t)+\widehat A(\zeta_j+t)v_{j\ell}(t)$.
The integer $\kappa_{j\ell}$ is the order that this root chain must cancel.
A single null vector, or a single adjugate value, does not encode the
conditions for $\kappa_{j\ell}>1$.

Let $[t^q]f(t)$ denote the coefficient of $t^q$ in the Taylor expansion of
$f$.  Then $[\,I_n\;\widehat A\,]\widehat{\mathcal M}^{-1}$ is analytic at
$\zeta_j$ if and only if
\begin{equation}
[t^q]\{\nu_{j\ell}(t)+\widehat A(\zeta_j+t)v_{j\ell}(t)\}=0,
\quad 0\leq q<\kappa_{j\ell},\quad 1\leq\ell\leq 2n.
\label{eq:smith-hermite}
\end{equation}
The local Smith form inverts as
$\widehat{\mathcal M}(\zeta_j+t)^{-1}
=R_j(t)\operatorname{diag}(t^{-\kappa_{j1}},\ldots,t^{-\kappa_{j,2n}})L_j(t)$,
and therefore
\begin{equation}
\begin{split}
&[\,I_n\;\widehat A(\zeta_j+t)\,]
\widehat{\mathcal M}(\zeta_j+t)^{-1}\\
&\qquad=[c_{j1}(t)\ \cdots\ c_{j,2n}(t)]\\
&\qquad\qquad\times\operatorname{diag}(t^{-\kappa_{j1}},\ldots,
t^{-\kappa_{j,2n}})L_j(t).
\end{split}
\label{eq:smith-product}
\end{equation}
Since $L_j$ is locally invertible, right multiplication by $L_j$ neither
creates nor removes poles, and it cannot cancel a genuine singularity by
mixing columns.  Analyticity of the product is therefore equivalent to
analyticity of each column $t^{-\kappa_{j\ell}}c_{j\ell}$, which is
\eqref{eq:smith-hermite}.  If every node condition holds, the reconstruction
row has no pole in the closed disk, hence lies in $\mathcal R^{n\times 2n}$.
Conversely, every stable reconstruction row satisfies all of the conditions.

The local Smith form of an analytic matrix is classical; it is used only to
read the $2n$ root chains of $\widehat{\mathcal M}$.  The increment is the
complete confluent encoding of pole cancellation for this affine
right-coprime factor segment, including repeated Smith indices, rank
degeneracies, and the point $z=-1$.  The equations
\eqref{eq:smith-hermite} do not assume simple zeros, a rank-one adjugate, or
pointwise diagonalizability
\cite{Cui2024multivariable,Cui2024general}.  The convolution form of
\eqref{eq:smith-hermite} is recorded in Appendix~\ref{app:smith}.

Nonreal zeros of a real-rational $\widehat{\mathcal M}$ occur in conjugate
pairs.  The local Smith data at $\bar\zeta$ may be taken as the
coefficientwise conjugate of the data at $\zeta$.  If $\widehat A$ is
real-rational as well, its jets are likewise conjugate, so each conjugate
pair of complex equations is equivalent to a real-and-imaginary pair.
Those conjugate constraints cannot be dropped by treating a nonreal node as
a real variable.

\subsection{The extra order at infinity}

At $\zeta_\infty=-1$, retain one additional jet coefficient after
\eqref{eq:smith-hermite} has been imposed.  Write
$c_{\infty,\ell}(t)=\nu_{\infty,\ell}(t)+\widehat A(-1+t)v_{\infty,\ell}(t)$,
so that $c_{\infty,\ell}(t)=t^{\kappa_{\infty,\ell}}\widetilde c_{\infty,\ell}(t)$,
and set
\begin{equation}
\gamma_{\infty,\ell}(\widehat A)=
[t^{\kappa_{\infty,\ell}}]c_{\infty,\ell}(t),
\quad
\Gamma_\infty(\widehat A)=
[\gamma_{\infty,1}\ \cdots\ \gamma_{\infty,2n}].
\label{eq:gamma-infty}
\end{equation}
Thus $\Gamma_\infty(\widehat A)$ is $n\times 2n$.  Let
\begin{equation*}
E_D=\begin{bmatrix}0_{n\times n}\\I_n\end{bmatrix}
\in\mathbb R^{2n\times n}
\end{equation*}
select the last $n$ columns.  Evaluating \eqref{eq:smith-product} at $t=0$
gives
$[\,\widehat N_A(-1)\;\widehat D_A(-1)\,]
=\Gamma_\infty(\widehat A)L_\infty(0)$, and therefore
\begin{equation}
D_A(\infty)=\Gamma_\infty(\widehat A)L_\infty(0)E_D.
\label{eq:D-infty}
\end{equation}
Controller properness is the finite inequality $\det D_A(\infty)\neq0$.
The identity remains valid when some or all of the Smith indices at
infinity vanish: if $\kappa_{\infty,\ell}=0$, the coefficient
$\gamma_{\infty,\ell}$ is simply the constant term of $c_{\infty,\ell}$.
Even if $-1$ is not a zero of $\det\widehat{\mathcal M}$ at all, the point
must still be retained in \eqref{eq:nodes} in order to evaluate the
denominator.  This extra order at $z=-1$ is not optional.  Ordinary nodes
need only the vanishing conditions \eqref{eq:smith-hermite}; infinity
requires one further jet coefficient.

The reconstruction row produced by the standing normalization is left
coprime, so properness of $K=D_A^{-1}N_A$ is equivalent to nonsingularity of
$D_A(\infty)$.  Thus \eqref{eq:D-infty} is the complete finite expression of
controller properness for the segment.

\subsection{Normalized jets, lengths, and radial return}

For an analytic matrix function $F$, the \emph{normalized jet of order} $m$
at $\alpha$ is the truncated Taylor list
\begin{equation*}
\mathcal J_\alpha^m F=(F_0,\ldots,F_m),
\qquad
F_q=\frac{F^{(q)}(\alpha)}{q!}.
\end{equation*}
Thus $\mathcal J_\alpha^m F$ records the coefficients of $t^0$ through
$t^m$.  Ordinary nodes require the vanishing conditions
\eqref{eq:smith-hermite}, hence the lengths
\begin{equation*}
m_j=\max_{1\leq\ell\leq 2n}\kappa_{j\ell}
\end{equation*}
away from $-1$: one retains the jets $\mathcal J_{\zeta_j}^{m_j-1}$.  At
infinity the extra coefficient in \eqref{eq:gamma-infty} forces
\begin{equation*}
m_\infty=1+\max_{1\leq\ell\leq 2n}\kappa_{\infty,\ell},
\end{equation*}
so one retains $\mathcal J_{-1}^{m_\infty-1}$.  These lengths are read from
the endpoint block $\widehat{\mathcal M}$.  They do not depend on the
McMillan degree of an unknown controller.  The chains $\nu_{j\ell}$,
$v_{j\ell}$ and the germs $L_j$ are input data; only finitely many of their
Taylor coefficients enter.  Products, inverses, and Cayley transforms of
such jets are finite convolutions in a truncated power-series ring;
Appendix~\ref{app:smith} records the algebra.

The nodes $\zeta_j$ may lie on the unit circle, whereas the confluent Pick
matrices used below require strictly interior nodes.  After
Theorem~\ref{thm:double-cayley}, the factors $U$, $Q_1$, and $Q_2$ are
analytic on a neighborhood of the closed disk, so a radius $r>1$
sufficiently close to one keeps $w\mapsto U(rw)$ unimodular in
$\mathcal R^{n\times n}$ and $w\mapsto Q_k(rw)$ strictly Schur, while
pushing every node to
\begin{equation*}
\alpha_j=\zeta_j/r\in\mathbb D.
\end{equation*}
If a matrix function $F$ has the expansion
$F(\alpha_j+\eta)=\sum_{q\geq0}F_{j,q}\eta^q$, radial return to the original
node produces
\begin{equation}
F\bigl((\zeta_j+t)/r\bigr)
=\sum_{q\geq0}r^{-q}F_{j,q}t^q.
\label{eq:radial-return}
\end{equation}
The Smith--Hermite equations \eqref{eq:smith-hermite} and the infinity
identity \eqref{eq:D-infty} are therefore imposed on the scaled coefficients
$r^{-q}F_{j,q}$, not on the interior jets alone.  Substituting the interior
nodes $\alpha_j$ for $\zeta_j$ while leaving Taylor coefficients unscaled
yields an incorrect criterion.

\subsection{Two lift doorways}

The finite data at the scaled nodes consist of conjugate-symmetric
$n\times n$ jets for a prospective unimodular factor $U$ and for two
prospective Schur factors $W_1,W_2$.  Two classical lifts convert those
jets into global rational matrices.  Neither lift is a simultaneous
stabilization theorem; each is recorded here only as a doorway from finite
jets to functions on the disk.

\begin{prop}[Unimodular Hermite lift]\label{prop:hermite-lift}
Let $\Lambda\subset\overline{\mathbb D}$ be a finite conjugation-invariant
node set.  At each node prescribe a conjugate-symmetric $n\times n$ jet
\begin{equation*}
\mathbf U_\alpha(\eta)=\sum_{q=0}^{m_\alpha-1}U_{\alpha,q}\eta^q,
\qquad \det U_{\alpha,0}\neq0.
\end{equation*}
There exists $U\in\mathrm{GL}_n(\mathcal R)$ matching all jets if and only
if the determinants $\det U_{\alpha,0}$ at all real nodes have the same
nonzero sign.  If there is no real node, the sign condition is vacuous.
When $n=1$ the construction is scalar.  For $n\geq2$ the lift uses the
classical identity $\mathrm{SL}_n(B)=E_n(B)$ on the associated Artin ring
\cite{Bass1968algebraic}.
\end{prop}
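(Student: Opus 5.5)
Necessity first.  If $U\in\mathrm{GL}_n(\mathcal R)$ matches the jets, then $\det U$ is a real-rational function with no zeros on $\overline{\mathbb D}$.  Its restriction to the real segment $[-1,1]$ is a continuous real function without zeros, so it has constant sign there.  Every real node $\alpha\in\Lambda$ lies in $[-1,1]$ and satisfies $\det U(\alpha)=\det U_{\alpha,0}$, so all these values share one sign.  For sufficiency I would argue in three steps: reduce to determinant one, solve the jet problem for $\det$ by a scalar lift, then absorb the rest with elementary matrices.

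\emph{Step 1: normalize the determinant.}  Build a scalar unit $u\in\mathcal R$ such that for every $\alpha\in\Lambda$ the jet of $u$ at $\alpha$ up to order $m_\alpha-1$ equals the jet of $\det\mathbf U_\alpha$.  Take the real-rational Hermite interpolation polynomial $g$ of these jets; conjugate-symmetric data make $g$ real.  Then correct it to a unit, using $u=g+\pi h$ with $\pi=\prod_{\alpha}(z-\alpha)^{m_\alpha}$ real.
\begin{itemize}
\item The sign hypothesis lets us choose a real continuous zero-free function on $\overline{\mathbb D}$ matching the prescribed values, with a logarithm $\log$ of appropriate branch at complex nodes.
\item Equivalently, interpolate the jets of a branch of $\log\det\mathbf U_\alpha$ by a real polynomial $\ell$ and set $u=\pm e^{\ell}$.
\item Approximate $e^{\ell}$ by a real polynomial while preserving the finitely many jet constraints, via Hermite-corrected Runge/Weierstrass approximation on $\overline{\mathbb D}$.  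This keeps $u$ zero-free.
\end{itemize}
The sign condition is exactly what allows a real logarithm at real nodes, after factoring out the common sign.  Replacing $\mathbf U_\alpha$ by $\mathbf U_\alpha\operatorname{diag}(u^{-1},1,\ldots,1)$, jetwise, reduces the problem to jets with $\det\equiv1$ modulo $\eta^{m_\alpha}$.  The case $n=1$ is finished at this point.

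\emph{Step 2: pass to an Artin ring.}  For $n\geq2$, let $B=\mathcal R/\pi\mathcal R$.  This is a finite-dimensional real algebra, isomorphic by the Chinese remainder theorem to the product of local truncated rings at the nodes, with conjugate pairs fused into a real factor.  The normalized jet family is an element of $\mathrm{SL}_n(B)$.  Since $B$ is Artinian, hence semilocal, $\mathrm{SL}_n(B)=E_n(B)$ \cite{Bass1968algebraic}.  So the family factors as a finite product of elementary matrices $I+b\,e_{ik}$ with $b\in B$.

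\emph{Step 3: lift.}  Lift each $b$ to any $\tilde b\in\mathcal R$, for instance a real polynomial.  Each lifted elementary matrix lies in $\mathrm{SL}_n(\mathcal R)$.  Their product $V$ is unimodular and matches the normalized jets modulo $\pi$.  Then $U=V\operatorname{diag}(u,1,\ldots,1)$ is in $\mathrm{GL}_n(\mathcal R)$ and matches the original jets.

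\emph{Main obstacle: the scalar unit in Step 1.}  The real-symmetric interpolation must stay zero-free on all of $\overline{\mathbb D}$, including boundary nodes.  Two points need care.
\begin{itemize}
\item \textbf{Logarithm branches.}  The branches at nonreal nodes must be chosen conjugate-symmetrically, which is always possible since their values there are nonzero.
\item \textbf{Approximation.}  The jet-preserving approximation has to be uniform enough that $|u-e^{\ell}|<\min_{\overline{\mathbb D}}|e^{\ell}|$.  I would obtain this by approximating with $p$ and then subtracting the Hermite interpolant of $p-e^{\ell}$'s jets.  That interpolant is small because it depends linearly on finitely many small jet values.
\end{itemize}
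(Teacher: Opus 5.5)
Your proof is correct and follows essentially the paper's route: necessity from constant sign of $\det U$ on $[-1,1]$; a scalar determinant lift $\sigma e^{\ell}$ built from a truncated logarithm and corrected to a zero-free real polynomial of the form $H+pq$ with the same jets; and then $\mathrm{SL}_n(B)=E_n(B)$ on $B$, with polynomial lifts of the elementary factors. Your $\mathcal R/\pi\mathcal R$ is isomorphic to the paper's $\mathbb R[z]/(p)$. The one point to tighten is your Hermite-corrected approximation step. The paper approximates the entire quotient $(f-H)/p$ by its Taylor polynomials, whereas your correction needs $e^{\ell}$ approximated on a disk of radius greater than one (e.g.\ by Taylor partial sums), so that Cauchy estimates make the error jets small also at nodes on the unit circle.
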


Necessity is elementary: if such a $U$ exists, then $\det U$ is continuous,
real-valued, and nonzero on the compact interval $[-1,1]$, hence of constant
sign.  For sufficiency, the identity $\mathrm{SL}_n(B)=E_n(B)$ on the Artin
ring $B=\mathbb R[z]/(p)$ associated with the node polynomial $p$ is
classical generation \cite{Bass1968algebraic}.  The if-and-only-if sign
criterion is the assembly that combines this generation with a scalar
determinant lift having no zero in the closed disk; when $n=1$ the
special-linear step is absent.  The proof is given in
Appendix~\ref{app:hermite}.

The Schur jets are interpolated by a standard confluent matrix
Nevanlinna--Pick theorem, not by an interpolation theory developed here.
If the associated confluent Pick matrix is positive definite, then there
exist real-rational Schur interpolants, and every real-rational Schur
interpolant matching the jets is given by a Redheffer linear-fractional
formula with a real-rational Schur parameter; see
Lemma~\ref{lem:supp-redheffer}.
The disk, Stein, and $J$-inner coefficient-matrix form used below is the
classical unit-disk theory of
\cite{BallGohbergRodman1990interpolation}; the bitangential
operator-argument theorem of \cite{Ball2018bitangential} is the
right half-plane counterpart and is not identified with the displayed
disk kernel.  Positive definiteness of the Pick matrix is the strict
feasibility condition.
Details of the specialization to the present jets---real coefficients under
conjugate symmetry, and rationality of the inverse parameter---are recorded
in Appendix~\ref{app:redheffer}.

Taken together, Proposition~\ref{prop:hermite-lift} and
Lemma~\ref{lem:supp-redheffer} produce global factors $U\in\mathrm{GL}_n(\mathcal R)$
and rational Schur $W_1,W_2$ that match the recorded interior jets exactly.
They do not, by themselves, produce an admissible matrix
$A\in\mathfrak A_{\mathrm{adm}}^{(n)}$.  A Pick-feasible pair of Schur jets
is not yet a feasible controller: the double-Cayley combination of the
lifted factors must still satisfy \eqref{eq:smith-hermite} and
\eqref{eq:D-infty} after the radial return \eqref{eq:radial-return}.  The
next section encodes those remaining conditions, together with the Hermite
sign condition and Pick positive definiteness, as one finite seed.

\section{Finite feasibility, exact decision, and synthesis}
\label{sec:decision}

Proposition~\ref{prop:reduction} reduced common stabilization of the
stated segment to nonemptiness of $\mathfrak A_{\mathrm{adm}}^{(n)}$.
Theorem~\ref{thm:double-cayley} factorized the remaining spectral-cut
constraint globally over $\mathcal R$.  Section~\ref{sec:smith} reduced
reconstruction stability and controller properness to finitely many
Smith--Hermite equations of input-determined jet length, together with
two classical lift doorways.  The two constraints now meet.  This
section compresses them, without loss for the existence question, into
one finite-dimensional semialgebraic seed.  Every admissible matrix
$A$, of whatever McMillan degree, produces an admissible seed, and
every admissible seed produces some admissible $A$.  Quantifier
elimination is applied only after that compression: it is the decision
engine, not the contribution.  A positive sample synthesizes one proper
real-rational common stabilizer.  Covering every remaining lift, rather
than constructing a single controller, is deferred to
Section~\ref{sec:atlas}.

\subsection{Admissible finite seeds}

The node set $\mathcal Z$, the local Smith chains
\eqref{eq:smith}--\eqref{eq:root-chain}, the cancellation equations
\eqref{eq:smith-hermite}, the infinity identity \eqref{eq:D-infty}, the
lengths $m_j$ and $m_\infty$, and the radial return
\eqref{eq:radial-return} are those of Section~\ref{sec:smith}.  In
particular $m_j=\max_\ell\kappa_{j\ell}$ away from $-1$ and
$m_\infty=1+\max_\ell\kappa_{\infty,\ell}$ at the node representing
infinity.  These integers are read from $\widehat{\mathcal M}$; they do
not depend on an unknown controller.  For $r>1$ the interpolation nodes
are the interior points $\alpha_j=\zeta_j/r\in\mathbb D$, including the
image of $-1$.  Normalized jets are those of
Section~\ref{sec:smith}: $\mathcal J_\alpha^m F=(F_0,\ldots,F_m)$ records
the coefficients of $t^0$ through $t^m$.

A \emph{finite seed} $\xi$ consists of:
\begin{itemize}
\item the jets $\mathcal J_{\alpha_j}^{m_j-1}U$ of a prospective
unimodular factor $U\in\mathcal R^{n\times n}$, hence $m_j$ coefficients
at each scaled ordinary node and $m_\infty$ coefficients at the image of
$-1$;
\item the jets $\mathcal J_{\alpha_j}^{m_j-1}W_k$ of two prospective
Schur factors $W_1,W_2$, of the same lengths;
\item two real scalars $0<\gamma_1,\gamma_2<1$.
\end{itemize}
All jets are recorded at the interior nodes $\alpha_j$, not at the
original nodes $\zeta_j$.  The unknown quantities in $\xi$ are therefore
finitely many matrix coefficients together with the two margins
$\gamma_k$.  The radius $r>1$ is a separate real parameter, not a
coordinate of $\xi$.

From these interior jets one forms, in the truncated power-series ring, the
double-Cayley combination
\begin{equation}
B(w)=U(w)^{-1}\mathcal C(\gamma_1 W_1(w))
\mathcal C(\gamma_2 W_2(w))U(w).
\label{eq:B-interior}
\end{equation}
The operations required to evaluate \eqref{eq:B-interior} through any
finite order---jet multiplication, the inverse-jet recursion
\eqref{eq:supp-inv-jet}, and the Cayley transform \eqref{eq:car}---are
finite algebraic operations.  No global interpolant is constructed at
this stage.  Writing $B(\alpha_j+\eta)=\sum_q B_{j,q}\eta^q$ and
invoking \eqref{eq:radial-return}, the candidate disk matrix
$\widehat A(z)=B(z/r)$ has $q$th coefficient $r^{-q}B_{j,q}$ at the
original node $\zeta_j$.  The Smith--Hermite and infinity conditions
are imposed on those scaled coefficients.

The seed $\xi$ is \emph{admissible} at the radius $r$ when the following
four finite conditions hold.
\begin{enumerate}
\item The jets are conjugate-symmetric.  Every zeroth-order coefficient
of $U$ is invertible.  At all real nodes, the determinants
$\det U_{j,0}$ are real, nonzero, and of the same sign.  If there is no
real node, the sign condition is vacuous.
\item The two confluent matrix Pick matrices associated with the jets
of $W_1$ and of $W_2$ are positive definite.
\item Substituting the jets of $U$ and of $Q_k=\gamma_k W_k$ into
\eqref{eq:B-interior}, and multiplying the $q$th Taylor coefficient at
each interior node by $r^{-q}$ as in \eqref{eq:radial-return}, produces
jets of $\widehat A$ at the original nodes $\zeta_j$ that satisfy every
Smith--Hermite equation \eqref{eq:smith-hermite}.
\item The determinant obtained from the infinity identity
\eqref{eq:D-infty}, evaluated on those same scaled coefficients, is
nonzero.
\end{enumerate}
The second condition is a strict interpolation test on the jets of
$W_k$.  The matrices that actually enter the Cayley product are
$Q_k=\gamma_k W_k$.  The scalars $\gamma_k$ therefore separate a closed
Schur interpolation parameter from the strict Schur margin required by
Theorem~\ref{thm:double-cayley}.  Replacing $\zeta_j$ by $\alpha_j$
while leaving Taylor coefficients unscaled yields an incorrect
criterion, as already recorded after \eqref{eq:radial-return}.

Let $\Psi_n(r,\xi)$ denote the conjunction of (1)--(4), viewed as a
finite first-order condition in the real and imaginary parts of the
seed coordinates and in the real variables $r,\gamma_1,\gamma_2$.  The
admissible seed set at radius $r$ is
\begin{equation*}
\mathscr J_{\mathrm{adm}}^{(n)}(r)
=\{\xi:\ \Psi_n(r,\xi)\}.
\end{equation*}
Thus $\mathscr J_{\mathrm{adm}}^{(n)}(r)$ is nonempty if and only if
there exists a seed $\xi$ with $\Psi_n(r,\xi)$.

\subsection{Finite seed criterion}

\begin{thm}[Finite seed criterion]\label{thm:seed}
Under the standing hypotheses,
\begin{equation*}
\mathfrak A_{\mathrm{adm}}^{(n)}\neq\emptyset
\quad\Longleftrightarrow\quad
\exists r>1\ \exists\xi:\ \Psi_n(r,\xi).
\end{equation*}
Equivalently, $\mathfrak A_{\mathrm{adm}}^{(n)}$ is nonempty if and
only if $\mathscr J_{\mathrm{adm}}^{(n)}(r)$ is nonempty for some real
$r>1$.
\end{thm}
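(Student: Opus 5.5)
The plan has two directions: compression of an admissible matrix to a seed, and lifting of a seed back to an admissible matrix. In both, Theorem~\ref{thm:double-cayley}, Proposition~\ref{prop:hermite-lift}, and Lemma~\ref{lem:supp-redheffer} do the global work. The finite conditions (1)--(4) are only the jet shadows of these global objects at the scaled nodes.

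\emph{Compression ($\Rightarrow$).} Let $A\in\mathfrak A_{\mathrm{adm}}^{(n)}$, with disk form $\widehat A\in\mathcal A_{\mathrm{cut}}^{(n)}$.
\begin{enumerate}
\item Theorem~\ref{thm:double-cayley} gives $\widehat A=U^{-1}\mathcal C(Q_1)\mathcal C(Q_2)U$ with $U\in\mathrm{GL}_n(\mathcal R)$ and $\lVert Q_k\rVert_\infty<1$.
\item All three factors are analytic on a neighborhood of $\overline{\mathbb D}$. So there is $r>1$ such that $U_r(w)=U(rw)$ is still unimodular and $\lVert Q_k(r\,\cdot)\rVert_\infty<1$.
\item Choose $\gamma_k\in(\lVert Q_k(r\,\cdot)\rVert_\infty,1)$ and set $W_k=\gamma_k^{-1}Q_k(r\,\cdot)$. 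Then each $W_k$ is strictly Schur, and $B(w)=\widehat A(rw)$.
\item Take $\xi$ to be the jets of $U_r,W_1,W_2$ at the $\alpha_j$ together with $\gamma_1,\gamma_2$.
\item Check the four conditions. Condition (1) holds because real-rational jets are conjugate-symmetric and $\det U_r$ has constant sign on $[-1,1]$, as in the necessity part of Proposition~\ref{prop:hermite-lift}. Condition (2) holds because a strictly Schur function, i.e.\ $\lVert W_k\rVert_\infty<1$, has a positive definite confluent Pick matrix; this is the standard necessity direction, via the kernel of $I-W_kW_k^*$. Conditions (3) and (4) hold because by \eqref{eq:radial-return} the scaled coefficients are exactly the Taylor coefficients of $\widehat A$ at $\zeta_j$. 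Those coefficients satisfy \eqref{eq:smith-hermite} and $\det D_A(\infty)\neq0$, since $A$ is admissible and by the analysis of Section~\ref{sec:smith}.
\end{enumerate}
One point needs care in step~2: choosing $r$ small enough that $U_r$ stays unimodular while the margins $\gamma_k<1$ survive. This is a simple continuity argument.

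\emph{Lift ($\Leftarrow$).} Suppose $\Psi_n(r,\xi)$ holds.
\begin{enumerate}
\item Condition (1) and Proposition~\ref{prop:hermite-lift} give $U\in\mathrm{GL}_n(\mathcal R)$ matching the $U$-jets.
\item Condition (2) and Lemma~\ref{lem:supp-redheffer} give real-rational Schur $W_k$ with $\lVert W_k\rVert_\infty\le1$ matching the $W_k$-jets.
\item Set $Q_k=\gamma_kW_k$, so $\lVert Q_k\rVert_\infty\le\gamma_k<1$.
\item Define $B=U^{-1}\mathcal C(Q_1)\mathcal C(Q_2)U$ and $\widehat A(z)=B(z/r)$.
\end{enumerate}
Since $r>1$, $z\mapsto z/r$ maps a neighborhood of $\overline{\mathbb D}$ into $\mathbb D$. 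Hence $\widehat A$ is real-rational and analytic there, so $\widehat A\in\mathcal R^{n\times n}$. The reverse inclusion of Theorem~\ref{thm:double-cayley}, applied to the functions $w\mapsto U(w/r)$ and $Q_k(w/r)$, places $\widehat A$ in $\mathcal A_{\mathrm{cut}}^{(n)}$. The spectral cut is therefore avoided on all of $\overline{\mathbb D}$.

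The Taylor coefficients of $B$ at $\alpha_j$ through order $m_j-1$ depend only on the prescribed jets. This is because jet multiplication, jet inversion, and the Cayley transform are truncation-compatible. By \eqref{eq:radial-return}, the jets of $\widehat A$ at $\zeta_j$ are then the scaled coefficients appearing in (3)--(4). The Smith--Hermite equivalence of Section~\ref{sec:smith} then gives $[\,I_n\;\widehat A\,]\widehat{\mathcal M}^{-1}\in\mathcal R^{n\times2n}$. The infinity identity \eqref{eq:D-infty}, which needs only coefficients up to order $m_\infty-1$, gives $\det D_A(\infty)\neq0$. Hence $A\in\mathfrak A_{\mathrm{adm}}^{(n)}$.

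\emph{Main obstacle.} I expect the hardest step to be the Pick-matrix equivalence in condition~(2) in both directions. In compression, one must make sure that strict Schur-ness of $W_k$ on the disk gives strict positivity, not mere semidefiniteness, of the confluent Pick matrix at the interior nodes $\alpha_j$. In the lift, Lemma~\ref{lem:supp-redheffer} only supplies $\lVert W_k\rVert_\infty\le1$, and it is the factor $\gamma_k<1$ that restores the strict margin. For the compression direction I would first try scaling: strict Schur-ness gives $\lVert W_k\rVert_\infty\le\rho<1$, and writing the Pick kernel as $\rho^2$ times the kernel of the Schur function $\rho^{-1}W_k$, plus a positive multiple of the Szeg\H{o} kernel (a positive definite confluent block), should yield positive definiteness.
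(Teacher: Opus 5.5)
Your proposal is correct and follows essentially the same route as the paper's proof: you compress via the double-Cayley factorization, a slight radial expansion and the margins $\gamma_k$, and you lift via Proposition~\ref{prop:hermite-lift}, Lemma~\ref{lem:supp-redheffer}, the reverse inclusion of Theorem~\ref{thm:double-cayley}, and radial return \eqref{eq:radial-return}. Your scaling argument for strict positivity of the confluent Pick matrix (a $\rho^2$-multiple of a Schur kernel plus $(1-\rho^2)$ times the Szeg\H{o} kernel, positive definite by controllability of $(T,X)$) supplies a step that the paper only asserts parenthetically.
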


The displayed quantification is over a real radius.  A rational radius
may be selected in the necessity argument below, because the unimodular
and strict Schur bounds persist on an open interval of radii and
$\mathbb Q$ is dense in that interval.  The predicate $r\in\mathbb Q$
is not first-order over a real closed field and is not part of
$\Psi_n$.  Section~\ref{sec:atlas} uses rational radii for a different
purpose, namely enumeration of atlas centers inside a fixed coefficient
field.

\begin{proof}
\emph{Necessity.}
Let $A\in\mathfrak A_{\mathrm{adm}}^{(n)}$ be arbitrary, of any McMillan
degree.  Theorem~\ref{thm:double-cayley} supplies a double-Cayley
factorization whose factors are analytic on a neighborhood of the closed
disk.  A slight radial expansion $w=rz$ with $r>1$ keeps $U$ unimodular
and $Q_k$ strictly Schur; the jet lengths $m_j,m_\infty$ are read from
the input block $\widehat{\mathcal M}$ and do not grow with the degree of
$A$.  Choose $\lVert Q_k\rVert_\infty<\gamma_k<1$ and set
$W_k=Q_k/\gamma_k$, so each $W_k$ is strict Schur (hence its confluent
Pick matrix is positive definite) while
$\mathcal C(\gamma_k W_k)=\mathcal C(Q_k)$.  Recording the jets of
$U,W_1,W_2$ yields a seed $\xi$.  Radial return
\eqref{eq:radial-return} recovers the original Smith--Hermite and infinity
data, so $\Psi_n(r,\xi)$ holds.  This is a
lossless compression, not an inversion of the lift below.

\emph{Sufficiency.}
Seed condition~(1) is the hypothesis of
Proposition~\ref{prop:hermite-lift}; condition~(2) is $\Pi>0$ for each
$W_k$ in Lemma~\ref{lem:supp-redheffer}.  The seed is therefore lifted by
those two statements.  Substituting $Q_k=\gamma_k W_k$ into
\begin{equation}
\widehat A(z)=U(z/r)^{-1}\mathcal C(Q_1(z/r))
\mathcal C(Q_2(z/r))U(z/r)
\label{eq:A-from-seed}
\end{equation}
produces a matrix in
$\mathcal A_{\mathrm{cut}}^{(n)}$.  Because the lifts match the recorded
jets, the scaled coefficients satisfy \eqref{eq:smith-hermite} and
\eqref{eq:D-infty}.  Hence $A\in\mathfrak A_{\mathrm{adm}}^{(n)}$.
Positive definiteness of the Pick matrices of $W_k$ is not controller
feasibility: the Smith--Hermite tests are imposed on the double-Cayley
combination $U^{-1}\mathcal C(Q_1)\mathcal C(Q_2)U$, not on each interpolant
separately.

The two directions do not depend on $n=2$.  The only size changes are
that $\widehat{\mathcal M}$ has $2n$ Smith chains and each Pick matrix has
state dimension $d=n\sum_j m_j$.
\end{proof}

\subsection{Unknowns, matrix sizes, and the first-order formula}

The unknown quantities in $\Psi_n$ are the finitely many jet coefficients
of $U,W_1,W_2$ together with the real scalars $r,\gamma_1,\gamma_2$.
Nodes, Smith indices, and jet lengths are input.  As already recorded after
the proof of Theorem~\ref{thm:seed}, the only $n$-dependent sizes are that
$\widehat{\mathcal M}$ is $2n\times 2n$ and each Pick matrix has state
dimension
\begin{equation*}
d=n\sum_j m_j.
\end{equation*}
The sentence $\Psi_n$ is a finite first-order formula over the reals
\cite{BasuPollackRoy2006algorithms}, with $r>1$ a real variable; it does
not contain the non-semialgebraic clause $r\in\mathbb Q$.  The Stein
solution $\Pi_k$ may be introduced as an existentially quantified
positive-definite unknown satisfying
$\Pi_k-T(r)\Pi_k T(r)^*=XX^*-Y_kY_k^*$ with nodes $\alpha_j=\zeta_j/r$,
or equivalently as the unique rational function of $(r,\xi)$ obtained by
inverting $I-T\otimes\overline T$.  Inverse jets are encoded by the
recursion \eqref{eq:supp-inv-jet}.  Nonreal determinants are tested by
$(\operatorname{Re}\delta)^2+(\operatorname{Im}\delta)^2>0$,
not by $\delta^2>0$.  The finitely many Taylor coefficients of the local
Smith germs that enter the formula lie in a finite real-algebraic
extension of the plant field; $\Psi_n$ is formed over that still
effectively presented extension.

\subsection{Exact decision and synthesis}

An algorithmic statement requires an exact coefficient model.  Let
$\mathbb F\subset\mathbb R$ be an effectively presented real closed
field: field operations, equality and order tests, root isolation,
finite algebraic extensions, and real quantifier elimination are all
effective.  Real algebraic numbers are the principal example.  Assume
that the coefficients of \eqref{eq:plant-line} lie in $\mathbb F$ and
that stable denominators of the endpoint factors are presented as
certified input, together with the standing hypotheses.

\begin{cor}[Exact decision]\label{cor:algorithm}
Under those hypotheses, there is a finite terminating exact procedure
that decides whether $\mathfrak A_{\mathrm{adm}}^{(n)}$ is nonempty.
A positive answer constructs one proper real-rational common stabilizer
of \eqref{eq:plant-line}.  A negative answer is obtained by deciding
exactly that the finite first-order seed formula $\Psi_n$ is false.  No
bound on the controller McMillan degree is an input to the procedure.
A proof-producing quantifier-elimination implementation may retain its
projection and sign data as an implementation certificate; no
implementation-independent certificate format is asserted.
\end{cor}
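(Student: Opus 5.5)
The plan is to combine Theorem~\ref{thm:seed} with effective real quantifier elimination, and then make the sufficiency direction of that theorem constructive.

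\emph{Step 1: compute the seed formula.} From the certified input, compute all the finite data that $\Psi_n$ needs.
\begin{enumerate}
\item The node set $\mathcal Z$: isolate the roots of $\det\widehat{\mathcal M}$ in $\overline{\mathbb D}$ within a finite algebraic extension of $\mathbb F$, and adjoin $-1$. Decide boundary membership $\lvert\zeta\rvert=1$ exactly, which is possible because order tests are effective.
\item The local Smith indices $\kappa_{j\ell}$ and the finitely many Taylor coefficients of $L_j$, $R_j$ (hence of $\nu_{j\ell}$, $v_{j\ell}$) that enter~\eqref{eq:smith-hermite} and~\eqref{eq:D-infty}. A local Smith reduction over the truncated ring $\mathbb F(\zeta_j)[t]/(t^{N})$, with $N$ large enough, only needs finitely many jet coefficients of $\widehat{\mathcal M}$. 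Each coefficient is computable by exact rational arithmetic at an algebraic point.
\item The lengths $m_j$ and $m_\infty$.
\end{enumerate}
Then write $\Psi_n(r,\xi)$ as in the preceding subsection. Its ingredients are the jet convolutions, the inverse-jet recursion, the scaled coefficients $r^{-q}B_{j,q}$, the Stein equation for $\Pi_k$ together with positive-definiteness stated through leading principal minors, the sign condition on $\det U_{j,0}$, and the properness determinant tested as $(\operatorname{Re}\delta)^2+(\operatorname{Im}\delta)^2>0$. All of these are polynomial (in)equalities with coefficients in the effectively presented extension.

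\emph{Step 2: decide.} Apply quantifier elimination to the closed sentence $\exists r\,\exists\xi\,(r>1\wedge\Psi_n(r,\xi))$. This terminates and returns true or false. By Theorem~\ref{thm:seed} together with Proposition~\ref{prop:reduction}, the answer equals the existence of a common stabilizer. No McMillan-degree bound appears, because the number of variables depends only on $n$, $\mathcal Z$ and the $\kappa$'s.

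\emph{Step 3: synthesize after a positive answer.} Use the effective sampling of semialgebraic sets to get a point $(r,\xi)$ with coordinates algebraic over $\mathbb F$; if desired, $r$ may be taken rational, since the feasible $r$-set is open. Then make the sufficiency direction of Theorem~\ref{thm:seed} effective.
\begin{enumerate}
\item Run the construction of Proposition~\ref{prop:hermite-lift}. The determinant lift is an explicit polynomial interpolation with prescribed sign, made zero-free on the closed disk. The $\mathrm{SL}_n(B)=E_n(B)$ factorization is obtained by terminating elementary-operation enumeration over the Artin ring.
\item Form $W_k$ from the Redheffer formula of Lemma~\ref{lem:supp-redheffer} with the constant parameter $0$.
\item Assemble $\widehat A$ by~\eqref{eq:A-from-seed}, form $[N_A\ D_A]=[I_n\ A]\mathcal M^{-1}$, and output $K=D_A^{-1}N_A$.
\end{enumerate}

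\emph{Main obstacle.} The hardest point is effectiveness of the lift, in particular the unimodular Hermite step. Proposition~\ref{prop:hermite-lift} is an existence statement, so one must confirm that each ingredient is a terminating computation over $\mathbb F$: the elementary factorization, and a zero-free determinant interpolant whose absence of zeros in $\overline{\mathbb D}$ can be certified (for instance by a Schur--Cohn test). I would first try to reduce this to a bounded search, increasing the degree of the scalar interpolant until the certified zero-freeness test passes. Existence of a sufficient degree follows from the proof in Appendix~\ref{app:hermite}, so the search terminates. This search is over auxiliary degrees, not over the controller's McMillan degree. The remaining claims about certificates are implementation remarks and need no proof.
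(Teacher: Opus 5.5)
Your proposal is correct and follows the paper's proof: you form $\Psi_n$ from the local Smith data, decide $\exists r>1\,\exists\xi\,\Psi_n$ by quantifier elimination, and invoke Theorem~\ref{thm:seed}; on a positive sample you lift $U$ by an auxiliary-polynomial search, which is the paper's Lemma~\ref{lem:supp-hermite-effective}, and lift $W_1,W_2$ by the central Redheffer interpolants $R=0$. One point needs tightening: at a fixed degree the candidates $H+pq$ form a continuum, so existence of a sufficient degree alone does not give termination; you need either the paper's argument (zero-freeness on the closed disk is open in the coefficients, so enumerating $q\in\mathbb Q[z]$ by degree and height eventually finds a zero-free candidate) or a per-degree existential query over the coefficients decided by quantifier elimination.
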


The existence decision does not presuppose or search over controller
McMillan degree.  Some auxiliary constructions after a positive sample
still use a terminating enumeration.  Those two facts are not the same,
and the second does not restore a degree bound as an input of the
decision.

\begin{proof}
Form $\Psi_n$ from the local Smith data of $\widehat{\mathcal M}$ over
$\mathbb F$, with a real variable $r>1$.  Quantifier elimination
\cite{Tarski1951decision,Collins1975QE} decides the sentence.  If it is
false, Theorem~\ref{thm:seed} yields that no proper real-rational common
stabilizer exists.  If it is true, Lemma~\ref{lem:supp-hermite-effective}
lifts the $U$ jets (enumerating an auxiliary scalar polynomial $q$, not a
controller degree) and the central Redheffer interpolants $R=0$ lift
$W_1,W_2$.  Equation \eqref{eq:A-from-seed} and
Proposition~\ref{prop:reduction} return one controller.  The procedure is
confined to the present structured class; it does not decide simultaneous
stabilization of three general plants
\cite{BlondelGevers1993undecidable}.
\end{proof}

The following procedure records the same construction.  It promises
only the termination already proved.  It does not claim a running-time
bound, a minimal McMillan degree, or an implementation-independent
infeasibility certificate.

\begin{description}
\item[Input.]
Endpoint factors $N_i,D_i$ with coefficients in $\mathbb F$, certified
stable denominators, and the standing hypotheses.
\item[Output.]
Either a proper real-rational common stabilizer $K$ of
\eqref{eq:plant-line}, or the decision that
$\mathfrak A_{\mathrm{adm}}^{(n)}=\emptyset$.
\item[Procedure.]
\begin{enumerate}
\item Clear certified stable denominators.  Isolate the zeros of
$\det\widehat{\mathcal M}$ in $\overline{\mathbb D}$ and compute the
local Smith data and the jet lengths $m_j$, $m_\infty$.
\item Form the finite first-order sentence
$\exists r>1\,\exists\xi\,\Psi_n(r,\xi)$ over $\mathbb F$, with $r$ a
real variable.  Do not restrict $r$ to $\mathbb Q$.
\item Decide the sentence by real quantifier elimination
\cite{Tarski1951decision,Collins1975QE}.
\item If the sentence is false, return that
$\mathfrak A_{\mathrm{adm}}^{(n)}$ is empty.
\item If it is true, extract a feasible sample $(r,\xi)$.  Construct
$U\in\mathrm{GL}_n(\mathcal R)$ by the effective Hermite procedure of
Lemma~\ref{lem:supp-hermite-effective}, which enumerates an auxiliary
scalar polynomial $q$ until $d=H+pq$ is zero-free on the closed disk.
\item Construct $W_1,W_2$ by the central Redheffer interpolants $R=0$,
form $Q_k=\gamma_k W_k$, and set $\widehat A$ by
\eqref{eq:A-from-seed}.
\item Return $K=D_A^{-1}N_A$.
\end{enumerate}
\end{description}

Quantifier elimination over real closed fields may be costly, and the
degree of the auxiliary polynomial $q$ accepted by the Hermite
enumeration may be large.  The McMillan degree of the controller
produced in the positive case is therefore not claimed to be minimal,
and no uniform complexity bound is asserted.  If a particular
elimination implementation returns projection polynomials and sign
conditions, those data may be retained as evidence of that
implementation; announcing that a finite formula is false is not the
same as exhibiting a short certificate that is independent of the
implementation and convenient for hand checking.

The statement is confined to the present nondegenerate square affine
right-coprime factor segment.  It does not assert that simultaneous
stabilization of three general plants is decidable
\cite{BlondelGevers1993undecidable}.  The finite seed remains a
constrained semialgebraic set: Theorem~\ref{thm:seed} decides emptiness
or nonemptiness of $\mathfrak A_{\mathrm{adm}}^{(n)}$, and the
procedure above returns one controller in the nonempty case.  The
remaining unimodular and Schur lifting freedom, and the covering of
every proper real-rational common stabilizer, are the subject of
Section~\ref{sec:atlas}.

\section{A surjective atlas of all common stabilizers}
\label{sec:atlas}

Section~\ref{sec:decision} answered an existence question: a finite
semialgebraic seed decides whether a proper common stabilizer exists and,
when the answer is positive, constructs one.  The present section retains,
for each feasible seed, two classical Schur lifts and a residual unimodular
degree of freedom, and ranges over rational radii and admissible seeds, so
as to cover every proper real-rational common stabilizer of the stated
$n\times n$ affine right-coprime factor segment.

A coefficient-domain distinction must be recorded before the atlas is
stated, rather than patched afterwards.  In the finite decision formula of
Theorem~\ref{thm:seed} and Corollary~\ref{cor:algorithm}, the radius enters
as a real variable: one quantifies $\exists r>1$ in a first-order
semialgebraic condition.  The restriction $r\in\mathbb Q$ is not a
semialgebraic predicate and must not be inserted into that seed formula.
For the atlas the same strict Schur and unimodular margins that justify
radial expansion also permit a lossless passage to rational radii.  Every
admissible double-Cayley factorization is analytic on a neighborhood of the
closed disk, so the admissible radii contain an open interval
$(1,1+\varepsilon)$.  Intersecting with $\mathbb Q$ therefore omits no
controller.  With $r\in\mathbb Q\cap(1,\infty)$, the interpolation nodes
$\alpha_j=\zeta_j/r$ and the associated real Blaschke data remain in the
fixed effectively presented field $\mathbb F$ of
Section~\ref{sec:decision}.  The countable unimodular centers below can
then be enumerated over $\mathbb F(z)$.  Admissible seeds and the three
Schur parameters may still have arbitrary real coefficients; only the
centers are restricted to $\mathbb F(z)$.  Density of those centers is what
yields coverage of every real-rational common stabilizer.

Fix such a rational radius $r>1$ and an admissible seed
$\xi\in\mathscr J_{\mathrm{adm}}^{(n)}(r)$.  The two strictly feasible
confluent Pick problems determined by the $W_1$ and $W_2$ jets of $\xi$
have rational $J$-inner coefficient matrices, where
$J=\operatorname{diag}(I_n,-I_n)$,
\begin{equation*}
\Theta_{k,\xi}=
\begin{bmatrix}
\Theta_{11}^{(k)}&\Theta_{12}^{(k)}\\
\Theta_{21}^{(k)}&\Theta_{22}^{(k)}
\end{bmatrix},\qquad k=1,2,
\end{equation*}
obtained from one finite Stein equation in the disk realization of
\cite{BallGohbergRodman1990interpolation}.
Every real-rational Schur lift is given by the classical Redheffer formula
\cite{BallGohbergRodman1990interpolation}
\begin{equation}
W_k(R_k)=(\Theta_{11}^{(k)}R_k+\Theta_{12}^{(k)})
\bigl(\Theta_{21}^{(k)}R_k+\Theta_{22}^{(k)}\bigr)^{-1},
\quad \lVert R_k\rVert_\infty\leq1.
\label{eq:redheffer}
\end{equation}
Each block of $\Theta_{k,\xi}$ is $n\times n$, so $\Theta_{k,\xi}$ is
$2n\times 2n$.  The Redheffer denominator is automatically unimodular over
$\mathcal R$.  Changing $R_k$ in the closed real-rational Schur class
preserves both the prescribed jets and the Schur bound.  We use
\eqref{eq:redheffer} as a standard interpolation tool; the contribution
below is its placement inside the seed atlas, not the interpolation theorem
itself.  The disk, confluent-jet, and real-rational specialization used here
is recorded in Appendix~\ref{app:redheffer}.

The unimodular factor is different.  Proposition~\ref{prop:hermite-lift}
supplies one unimodular Hermite lift $U_\xi\in\mathrm{GL}_n(\mathcal R)$ of
the $U$ jets of $\xi$.  Any other matching lift may be written uniquely as
$U=U_\xi E$, where $E$ has identity jets
\begin{equation*}
\mathcal J_{\alpha_j}^{m_j-1}E=\mathcal J_{\alpha_j}^{m_j-1}I_n
\quad\text{for all }j.
\end{equation*}
Thus the residual freedom is exactly the identity-jet subgroup of
$\mathrm{GL}_n(\mathcal R)$.  Let
$b(z)=\prod_j(z-\alpha_j)^{m_j}\in\mathbb R[z]$, and let $\beta$ be a finite
real Blaschke product with the same zeros and multiplicities,
\begin{equation*}
\beta(z)=\eta\prod_j
\left(\frac{z-\alpha_j}{1-\bar\alpha_jz}\right)^{m_j},
\qquad \lvert\eta\rvert=1,
\end{equation*}
the unimodular constant $\eta$ being chosen so that the coefficients of
$\beta$ are real.  Then $\lvert\beta\rvert=1$ on the unit circle.  Identity
jets are equivalent to $E-I_n=bG$ for some $G\in\mathcal R^{n\times n}$.
Enumerating
$G_\nu\in\mathbb F(z)^{n\times n}\cap\mathcal R^{n\times n}$ and retaining
only those candidates for which
$E_\nu=I_n+bG_\nu\in\mathrm{GL}_n(\mathcal R)$ produces a countable family
of unimodular centers with identity jets.  After stable denominators are
cleared, membership in this family is a first-order test over $\mathbb F$.
There is then a surjective covering
\begin{equation}
\begin{split}
&\{E\in \mathrm{GL}_n(\mathcal R):
  \mathcal J_{\alpha_j}^{m_j-1}E=\mathcal J_{\alpha_j}^{m_j-1}I_n\}\\
&\quad=\bigcup_{\nu\geq0}
\{E_\nu\mathcal C(\beta R_0):
  \lVert R_0\rVert_\infty<1\}.
\end{split}
\label{eq:E-atlas}
\end{equation}
A small-norm neighborhood of the identity is used only to select a center
in the covering argument; every strict real-rational Schur parameter
$R_0$ is admissible on the right-hand side of \eqref{eq:E-atlas}.
Density of the centers and the local Cayley--Blaschke chart are recorded
in Appendix~\ref{app:atlas}.

The total reconstruction is as follows.  Let $U_\xi$ be one unimodular
Hermite lift of $\xi$, and set
\begin{equation}
\begin{split}
U&=U_\xi E_\nu\mathcal C(\beta R_0),\qquad
Q_k=\gamma_k W_k(R_k),\\
\widehat A(z)&=U(z/r)^{-1}\mathcal C(Q_1(z/r))
\mathcal C(Q_2(z/r))U(z/r).
\end{split}
\label{eq:A-atlas}
\end{equation}
Transporting $\widehat A$ back to the half-plane by
$s=(1-z)/(1+z)$ and applying \eqref{eq:KfromA} produces a controller $K$.

\begin{thm}[All-controller atlas]\label{thm:atlas}
Let $r$ range over $\mathbb Q\cap(1,\infty)$ and let
$\xi$ range over $\mathscr J_{\mathrm{adm}}^{(n)}(r)$.  For each such pair,
let $\nu\geq0$ and let $R_0,R_1,R_2$ range over the real-rational Schur
classes specified in \eqref{eq:redheffer}--\eqref{eq:E-atlas}.  Equations
\eqref{eq:A-atlas} and \eqref{eq:KfromA} produce only proper common
stabilizers of the stated $n\times n$ segment.  As $(r,\xi,\nu,R_0,R_1,R_2)$
range as above, every proper real-rational common stabilizer of that
segment arises at least once.  The resulting map is surjective and need
not be injective.
\end{thm}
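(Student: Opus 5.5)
The proof has two directions: soundness (every atlas output is a proper common stabilizer) and surjectivity (every proper common stabilizer appears). Both reduce, through Proposition~\ref{prop:reduction}, to statements about $\widehat A$ and $\mathfrak A_{\mathrm{adm}}^{(n)}$.

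\emph{Soundness.} Fix a rational $r>1$, an admissible seed $\xi\in\mathscr J_{\mathrm{adm}}^{(n)}(r)$, an index $\nu$, and Schur parameters $R_0,R_1,R_2$.

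By \eqref{eq:E-atlas}, $E_\nu\mathcal C(\beta R_0)$ is unimodular with identity jets at every $\alpha_j$. Hence $U=U_\xi E_\nu\mathcal C(\beta R_0)$ lies in $\mathrm{GL}_n(\mathcal R)$ and has the same jets as $U_\xi$, namely the $U$ jets recorded in $\xi$. For the identity-jet claim I would verify directly that $\mathcal C(\beta R_0)-I_n=2\beta R_0(I_n-\beta R_0)^{-1}$. This is divisible by $b$ because $\beta/b$ is a unit near the closed disk.

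By \eqref{eq:redheffer}, each $W_k(R_k)$ is a real-rational Schur function matching the $W_k$ jets of $\xi$. Then $Q_k=\gamma_kW_k$ satisfies $\lVert Q_k\rVert_\infty\le\gamma_k<1$, so it is strict Schur. The reverse inclusion of Theorem~\ref{thm:double-cayley} then places $w\mapsto U(w)^{-1}\mathcal C(Q_1(w))\mathcal C(Q_2(w))U(w)$ in $\mathcal A^{(n)}_{\mathrm{cut}}$. The rescaled $\widehat A(z)=B(z/r)$ is also in that class, since the spectral cut is a pointwise condition on $|z/r|\le 1/r<1$ and the functions are analytic there.

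Because all jets of $U,W_1,W_2$ at the $\alpha_j$ coincide with those recorded in $\xi$, the jets of $B$ coincide with those formed in the truncated ring. Seed conditions (3)--(4) together with \eqref{eq:radial-return} then give \eqref{eq:smith-hermite} and $\det D_A(\infty)\ne0$. So $A\in\mathfrak A_{\mathrm{adm}}^{(n)}$, and Proposition~\ref{prop:reduction} yields a proper common stabilizer. This direction is the sufficiency half of Theorem~\ref{thm:seed}, with arbitrary lifts in place of central ones.

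\emph{Surjectivity.} Let $K$ be any proper common stabilizer, and let $A$ be the unique matrix given by Proposition~\ref{prop:reduction}.

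1. \emph{Factor and choose a rational radius.} Theorem~\ref{thm:double-cayley} gives $\widehat A=U_*^{-1}\mathcal C(Q_1^*)\mathcal C(Q_2^*)U_*$, with all factors analytic on a neighborhood of $\overline{\mathbb D}$. The set of radii keeping $U_*(r\,\cdot)$ unimodular and $Q_k^*(r\,\cdot)$ strict Schur contains an interval $(1,1+\varepsilon)$. Pick a rational $r$ in it and set $U(w)=U_*(rw)$ and $Q_k(w)=Q^*_k(rw)$.

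2. \emph{Build the seed.} As in the necessity proof of Theorem~\ref{thm:seed}, choose $\lVert Q_k\rVert_\infty<\gamma_k<1$ and set $W_k=Q_k/\gamma_k$. Let $\xi$ record the jets of $U,W_1,W_2$ together with $\gamma_1,\gamma_2$. Then $\xi\in\mathscr J_{\mathrm{adm}}^{(n)}(r)$.

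3. \emph{Recover the Schur factors.} By the completeness half of Lemma~\ref{lem:supp-redheffer}, each $W_k$ is a Schur interpolant of the $\xi$ jets. Hence $W_k=W_k(R_k)$ for some real-rational Schur $R_k$.

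4. \emph{Recover the unimodular factor.} The quotient $E=U_\xi^{-1}U$ is unimodular with identity jets. By \eqref{eq:E-atlas}, $E=E_\nu\mathcal C(\beta R_0)$ for some $\nu$ and some strict Schur $R_0$.

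Substituting these choices into \eqref{eq:A-atlas} reproduces $\widehat A$, hence $K$. Non-injectivity is expected: $r$, the $\gamma_k$, and the center $E_\nu$ can all be varied without changing $K$.

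\emph{Main obstacle.} The hardest step is the covering identity \eqref{eq:E-atlas}, i.e.\ writing an arbitrary identity-jet unimodular $E$ as $E_\nu\mathcal C(\beta R_0)$ with an $\mathbb F$-rational center. I would proceed as follows.

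First approximate $G=(E-I_n)/b$ in $H^\infty$ norm by some $G_\nu\in\mathbb F(z)^{n\times n}\cap\mathcal R^{n\times n}$. This uses density of $\mathbb F(z)$ in real-rational stable functions, applied to coefficients and stable poles. For $G_\nu$ close enough to $G$, $E_\nu=I_n+bG_\nu$ stays unimodular. Moreover $F=E_\nu^{-1}E$ is close to $I_n$ and still has identity jets.

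Next solve $F=\mathcal C(\beta R_0)$ via $\beta R_0=(F-I_n)(F+I_n)^{-1}$. This right-hand side is small in norm and divisible by $b$, hence by $\beta$. Since $|\beta|=1$ on the circle, $R_0$ is real-rational, stable, and of the same small norm, so it is strict Schur.

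The delicate point is keeping every constant uniform in this quotient by $\beta$. That requires the factor $(F+I_n)^{-1}$ to stay bounded, which holds because $F$ is near $I_n$. This is precisely the estimate deferred to Appendix~\ref{app:atlas}.
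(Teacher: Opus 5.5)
Your proposal is correct and follows the paper's proof essentially step for step. Soundness goes through jet preservation under the Redheffer and identity-jet parameters, the margin $\gamma_k<1$, the reverse inclusion of Theorem~\ref{thm:double-cayley}, and the inherited seed conditions (3)--(4); surjectivity goes through normalization, double-Cayley factorization at a rational radius, the necessity construction of the seed, inverse Redheffer parameters, and the covering \eqref{eq:E-atlas}, whose proof you sketch exactly as the density-plus-inverse-Cayley-chart argument of Appendix~\ref{app:atlas} (approximate $G=(E-I_n)/b$ by an $\mathbb F$-rational center, then divide the inverse Cayley transform of $E_\nu^{-1}E$ by $\beta$).
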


\begin{proof}
(i) Every output is legal.  The parameters $R_1,R_2$ preserve the $W_k$
jets of $\xi$ by the classical Redheffer parameterization
\eqref{eq:redheffer} \cite{BallGohbergRodman1990interpolation}.  The identity-jet covering \eqref{eq:E-atlas} preserves
the $U$ jets, so $U$ remains a unimodular Hermite lift of those jets.
Because $0<\gamma_k<1$, each Cayley input $Q_k$ is strictly Schur even when
$R_k$ merely satisfies $\lVert R_k\rVert_\infty\leq1$.  Invoking the reverse
inclusion in Theorem~\ref{thm:double-cayley} therefore yields spectral-cut
avoidance on the closed disk.  The scaled jets of $\widehat A$ coincide with
those encoded by $\xi$, so the Smith--Hermite equations and the infinity
determinant condition of $\xi$ are inherited.  Proposition~\ref{prop:reduction}
then returns a proper real-rational common stabilizer of the stated
segment.

(ii) Every proper real-rational common stabilizer is recovered.  Normalize
an arbitrary such controller by Proposition~\ref{prop:reduction}, obtaining
its unique matrix $A\in\mathfrak A_{\mathrm{adm}}^{(n)}$.  Invoke
Theorem~\ref{thm:double-cayley} in the disk coordinate.  The resulting
unimodular and strict Schur factors are analytic on a neighborhood of the
closed disk, so a rational radius $r>1$ sufficiently
close to one may be chosen, the factors may be radially rescaled, and a
finite admissible seed $\xi$ may be recorded; this is the necessity
direction of Theorem~\ref{thm:seed}, with the lossless rational restriction
on $r$ already justified above.  Invoking the inverse Redheffer
transformation recovers real-rational parameters $R_1,R_2$ for the two
Schur lifts.  The residual $U_\xi^{-1}U$ has identity jets, so
\eqref{eq:E-atlas} recovers an index $\nu$ and a strict Schur parameter
$R_0$.  Substitution into \eqref{eq:A-atlas} reproduces the normalized $A$,
and \eqref{eq:KfromA} therefore reproduces the original controller.
Density of the enumerated centers and the local Cayley chart used to
construct \eqref{eq:E-atlas} are proved in Appendix~\ref{app:atlas}.  The
increment is this surjective atlas for the stated $n\times n$ segment; the
map need not be injective.
\end{proof}

\begin{rem}
The finite seed remains constrained by the semialgebraic conditions of
Theorem~\ref{thm:seed}.  After a feasible seed and a center are fixed, the
three Schur-class functions may vary freely within their stated classes,
but they do not furnish a single unconstrained and unique Youla coordinate
for the original problem
\cite{Obinata1988characterization}.  Uniqueness of the normalized matrix
$A$ associated with a given controller, as in
Proposition~\ref{prop:reduction}, does not make the intermediate atlas
coordinates unique: the same controller may arise from several
double-Cayley factorizations, several admissible radii, several seeds, and
several centers.  The index $\nu$ merely enumerates the countable family of
unimodular centers; it is not an index of connected components of the
identity-jet subgroup.  The branch centers and the three Schur parameters
may have arbitrarily large McMillan degree.
\end{rem}

\section{Exact examples and separation results}
\label{sec:examples}

This section records three exact instances, organized by the claim each one
supports.  A displayed controller together with a closed-loop identity proves
feasibility of that controller on that segment.  A prescribed finite seed,
with its Smith jets and Pick matrices, proves that the seed is admissible.
None of the instances below is presented as an automatic
quantifier-elimination solve of the decision formula in
Theorem~\ref{thm:seed}.  Discrete frequency and parameter samples are
visualization only: they do not replace an identity on the closed disk and
the continuum of $\lambda$.

\subsection{\texorpdfstring{A repeated-zero $2\times 2$ feasible instance ($E+$)}{A repeated-zero 2 x 2 feasible instance (E+)}}

This instance certifies that a repeated Smith chain, a nonzero infinity
index, and an explicit finite seed can occur together on a feasible
$2\times 2$ segment.  In the disk coordinate, define
\begin{equation*}
\begin{split}
p(z)&=\frac{z^2(1+z)}8,\qquad
q(z)=-\frac{1+3z}8,\\
h(z)&=\frac{(1+z)^2}{64},\qquad
G(z)=\begin{bmatrix}p(z)&h(z)\\0&q(z)\end{bmatrix},\\
C(z)&=-4(1+z)I_2.
\end{split}
\end{equation*}
Take
\begin{equation*}
(N_0,D_0)=(0,I_2),\qquad
(N_1,D_1)=(G,I_2-CG).
\end{equation*}
For every $\lambda\in[0,1]$,
$CN_\lambda+D_\lambda=I_2$, so the affine factor pair is right coprime.
At $z=-1$, $C(-1)=0$ and hence $D_\lambda(-1)=I_2$, so every plant is proper.
The nonconstant controller row
\begin{equation*}
[\,N_c\;D_c\,]=[\,I_2+C\;I_2\,]
\end{equation*}
gives
\begin{equation}
K(s)=\frac{s-7}{s+1}I_2,\qquad
\Delta_\lambda=I_2+\lambda G.
\label{eq:positive-controller}
\end{equation}
This is already an exact direct certificate: $\Delta_\lambda$ is upper
triangular and, on $\lvert z\rvert\leq1$,
$\lvert p(z)\rvert\leq1/4$ and $\lvert q(z)\rvert\leq1/2$.  Both diagonal
entries are therefore bounded away from zero for the entire continuum of
$\lambda$.  All entries are polynomial, and the inverse remains analytic on a
neighborhood of the closed disk.

The instance exercises repeated zeros and infinity.  In fact,
\begin{equation*}
\det\mathcal M=\det G
=-\frac{z^2(1+z)(1+3z)}{64},
\end{equation*}
and the full local Smith exponent tuples of $\widehat{\mathcal M}$ are
$(0,0,0,2)$ at $z=0$, $(0,0,0,1)$ at $z=-1/3$, and $(0,0,0,1)$ at
$z=-1$, the last point representing infinity.  The polynomials $p,h,q$
are linearly independent, since their respective degrees are $3,2,1$.
Consequently the coefficient matrices of $G$ span the three-dimensional
space generated by $E_{11},E_{12},E_{22}$.  If constant nonsingular matrices
$L,R$ made $LP_\lambda(z)R$ diagonal for every $\lambda$, differentiating
$P_\lambda=\lambda G(I_2-\lambda CG)^{-1}$ at $\lambda=0$ would make
$LG(z)R$ diagonal for every $z$.  Left and right multiplication by
nonsingular constant matrices preserve the dimension of the coefficient
span, whereas diagonal $2\times2$ matrices span a space of dimension two.
This contradiction rules out such a fixed decoupling.  Moreover, the
$(2,2)$ entry of $D_1$ has numerator
$1-4z-3z^2$ and a zero $(-2+\sqrt7)/3\in(0,1)$; the endpoint plant is
therefore not merely a stable plant disguised by the factors.

For an exact finite-seed certificate, let $r=6/5$,
$G_r(w)=G(rw)$, and choose
\begin{equation}
\begin{split}
&U=I_2,\qquad Q_1=W_1=0,
\qquad \gamma_1=\gamma_2=\frac12,\\
&Q_2=G_r(2I_2+G_r)^{-1},\qquad W_2=2Q_2.
\end{split}
\label{eq:positive-seed}
\end{equation}
Then
$\mathcal C(Q_2)=I_2+G_r$, and evaluation at $w=z/r$ recovers
$A=I_2+G$.  The interpolation nodes are
\begin{equation*}
0,\quad -\frac5{18},\quad -\frac56,
\qquad\text{with multiplicities }2,1,2,
\end{equation*}
which are the retained jet lengths at $0$, $-1/3$, and infinity after the
radial scaling.  Exact entrywise bounds give
\begin{equation}
\lVert W_2\rVert_\infty
\leq2\sqrt{\frac{476779337}{2089769796}}
<1.
\label{eq:positive-schur-bound}
\end{equation}
The two confluent Pick matrices are $10\times10$; all their leading principal
minors are positive.  Feasibility is proved by
\eqref{eq:positive-controller} and by this seed, not by frequency sampling.

\begin{figure}[t]
\centering
\includegraphics[width=0.97\textwidth]{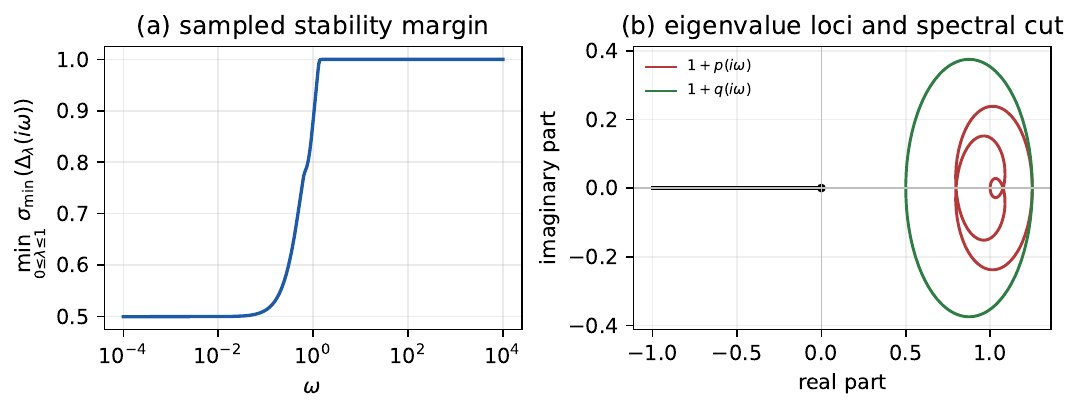}
\caption{Numerical illustration for the feasible instance $E+$.  (a)~The
sampled quantity $\min_{\lambda}\sigma_{\min}(\Delta_\lambda(i\omega))$ over a
$301$-point grid in $\lambda$; $500$ logarithmic frequency samples give a
minimum of approximately $0.499258$.  This is a sampled invertibility
quantity in the chosen factor representation, not an intrinsic robust
stability radius unless a further argument is supplied.  (b)~The two
eigenvalue loci of $I_2+G(z(i\omega))$ and the excluded spectral cut.  The
plot is a numerical illustration only.  Feasibility is proved by the
identity \eqref{eq:positive-controller} and by the exact seed
\eqref{eq:positive-seed}--\eqref{eq:positive-schur-bound}, not by the plot.}
\end{figure}

\subsection{Strict separation from a single-square-root class}

This instance certifies a strict inclusion between a comparison class defined
presently and the set of all proper common stabilizers of one and the same
segment.  It does not prove that the segment cannot be stabilized by a
square-root construction, and it does not refute an unidentified method from
the literature.

For a proper common stabilizer $K$ of a $2\times 2$ segment, let $A_K$ denote
the matrix obtained after the normalization $\Delta_0=I_2$ in
Proposition~\ref{prop:reduction}.  This matrix is unique: two stable
left-coprime representations of the same controller differ by a left
unimodular multiplier, and the normalization forces that multiplier to be
$I_2$.  Define
\begin{equation*}
\begin{split}
\mathfrak A_{\mathrm{sq}}&=\{A\in\mathfrak A_{\mathrm{adm}}^{(2)}:
 \exists B\in\mathcal H^{2\times2},\ A=B^2,\\
&\quad\widehat B(z)+\widehat B(z)^*>0\ (\lvert z\rvert\leq1)\},\\
\mathfrak K_{\mathrm{sq}}&=\{K:\ K\text{ is a proper common stabilizer}\\
&\quad\text{and }A_K\in\mathfrak A_{\mathrm{sq}}\},
\end{split}
\end{equation*}
and write $\mathfrak K_{\mathrm{all}}$ for the set of all proper
real-rational common stabilizers of the segment under discussion.  Thus
$\mathfrak K_{\mathrm{sq}}$ consists of those common stabilizers whose unique
normalized matrix is the square of a strictly accretive real-rational matrix.
The class is defined here; it is of the same type as a single-square-root
ansatz, but it is not identified with the constructions of
\cite{Cui2024multivariable,Cui2024general}.

Let
\begin{equation*}
\begin{aligned}
x(s)&=\frac{(s-1)(s-2)}{4(s+1)^2},\\
\alpha(s)&=1+x(s),& \eta(s)&=2+x(s),
\end{aligned}
\end{equation*}
\begin{equation*}
\begin{aligned}
V(s)&=\begin{bmatrix}1&\eta(s)\\0&1\end{bmatrix},\\
A_*(s)&=V(s)\operatorname{diag}(\alpha(s),2)V(s)^{-1},
\end{aligned}
\end{equation*}
and put $P=A_*-I_2$, $N_0=0$, $D_0=I_2$, $N_1=P$, and $D_1=I_2$.  Then
$\det\mathcal M=\det P=x$, whose relevant zeros are the simple points
$s=1,2$, and $\mathcal M(\infty)$ is nonsingular.  Explicitly,
\[
A_*=\begin{bmatrix}1+x&(2+x)(1-x)\\0&2\end{bmatrix},\qquad
P=\begin{bmatrix}x&(2+x)(1-x)\\0&1\end{bmatrix}.
\]

On the imaginary axis,
\begin{equation*}
\left\lvert\frac{(i\omega-1)(i\omega-2)}{(i\omega+1)^2}\right\rvert^2
=\frac{\omega^2+4}{\omega^2+1}\leq4.
\end{equation*}
The maximum modulus principle gives $\lvert x(s)\rvert\leq1/2$ on the closed
right half-plane.  Hence the controller $K_*=I_2$ has
$\Delta_\lambda=I_2+\lambda P$ with eigenvalues $1+\lambda x(s)$ and
$1+\lambda$, and it stabilizes the entire segment.

However, $A_*$ has no square root over $\mathbb R(s)$.  Any $B$ satisfying
$B^2=A_*$ commutes with $A_*$.  Since the eigenvalues $\alpha$ and $2$ are
distinct in $\mathbb R(s)$, the centralizer is diagonal in the $V$ basis, so
one diagonal entry would satisfy $b_1^2=\alpha$.  But
\begin{equation}
\alpha(s)=\frac{5s^2+5s+6}{4(s+1)^2},
\label{eq:alpha-not-square}
\end{equation}
the numerator has discriminant $-95$ and is irreducible in $\mathbb R[s]$, and
it is coprime to $s+1$.  As a rational function this irreducible factor has
odd valuation.  Every square in $\mathbb R(s)$ has even valuation at every
irreducible, so $\alpha$ is not a square and $K_*\notin\mathfrak K_{\mathrm{sq}}$.
Changing the left-coprime representation of $K_*$ cannot avoid the obstruction:
$A_{K_*}$ is unique.

The comparison class is nevertheless nonempty.  The second row of $P$ is
$[\,0\;1\,]$.  The constant controller
\begin{equation*}
K_0=\begin{bmatrix}0&2\\0&1\end{bmatrix}
\end{equation*}
therefore satisfies $K_0P=K_0$.  With the left-coprime row
$[\,N_c\;D_c\,]=[\,K_0\;I_2\,]$ one has
$\Delta_\lambda=I_2+\lambda K_0$, a constant invertible matrix, so $K_0$ is a
proper common stabilizer.  The normalization $\Delta_0=I_2$ is already in
force, and the unique normalized matrix is
\begin{equation*}
A_{K_0}=I_2+K_0P=I_2+K_0
=\begin{bmatrix}1&2\\0&2\end{bmatrix}
=:A_0.
\end{equation*}
Admissibility of $A_0$ does not require a separate Smith reconstruction:
$[\,K_0\;I_2\,]$ already lies in $\mathcal H^{2\times4}$, the spectrum
$\{1,2\}$ misses the cut $(-\infty,0]$, and $D_{A_0}=I_2$.  The constant
matrix
\begin{equation*}
B_0=\begin{bmatrix}1&2/(1+\sqrt2)\\0&\sqrt2\end{bmatrix}
\end{equation*}
satisfies $B_0^2=A_0$.  The leading principal minors of
$(B_0+B_0^T)/2$ are $1$ and $3(\sqrt2-1)>0$, so $B_0$ is strictly accretive.
Hence $A_0\in\mathfrak A_{\mathrm{sq}}$ and
$K_0\in\mathfrak K_{\mathrm{sq}}$.  In particular the same segment does admit
a square-root controller; what the example excludes is the membership of
$K_*$ in $\mathfrak K_{\mathrm{sq}}$.  One concludes
\begin{equation*}
\varnothing\neq\mathfrak K_{\mathrm{sq}}
\subsetneq\mathfrak K_{\mathrm{all}}.
\end{equation*}
Theorem~\ref{thm:double-cayley} permits two unequal strictly accretive
factors and therefore does not encounter the odd-valuation obstruction
\eqref{eq:alpha-not-square}.

\subsection{\texorpdfstring{An exactly infeasible instance ($E-$)}{An exactly infeasible instance (E-)}}

This instance certifies exact infeasibility of a nondegenerate affine
segment.  It is not used as a coupling obstruction.  Let
\begin{equation*}
\begin{split}
n(z)&=\frac{z(z-1/2)}{16},\qquad
d(z)=81-\frac{13120}{81}z,\\
S&=\frac15\begin{bmatrix}3&-4\\4&3\end{bmatrix},\qquad
H=\frac18\begin{bmatrix}0&1\\1&1\end{bmatrix},
\end{split}
\end{equation*}
and set
\begin{equation*}
(N_0,D_0)=(0,I_2),\qquad
(N_1,D_1)=(nS,dI_2+nH).
\end{equation*}
The standing assumptions hold.  For $\lambda>0$, $N_\lambda(z)$ is invertible
unless $n(z)=0$; at the two zeros $0$ and $1/2$, $D_\lambda$ is a positive
scalar matrix.  At infinity, $D_1(-1)$ is positive definite, and convexity
treats every $\lambda$.  Also
\begin{equation*}
\det\mathcal M=n(z)^2\not\equiv0.
\end{equation*}

Suppose a common proper controller existed, and normalize $\Delta_0=I_2$.
Since $N_0=0$ and $D_0=I_2$, the normalized denominator factor is $D_c=I_2$.
Therefore $A=\Delta_1=N_c N_1+D_1$ is forced to satisfy
\begin{equation}
A(0)=81I_2,\qquad A(1/2)=\frac1{81}I_2.
\label{eq:negative-forced-values}
\end{equation}
Spectral-cut avoidance makes the principal square root $G=A^{1/2}$ analytic
on the disk \cite{Higham2008functions}; rationality of $G$ is not required.
The matrix
\begin{equation*}
F=(G-I_2)(G+I_2)^{-1}
\end{equation*}
has all eigenvalues in the open unit disk.  The open disk is convex, so the
average of those eigenvalues lies in the open disk.  Hence
$\varphi=\tfrac12\operatorname{tr}F$ is a scalar Schur function.  Equation
\eqref{eq:negative-forced-values} imposes
\[
\varphi(0)=\frac45,\qquad \varphi(1/2)=-\frac45.
\]
With $\rho(a,b)=\lvert a-b\rvert/\lvert1-\bar b a\rvert$ denoting the
pseudohyperbolic distance, this contradicts the Schwarz--Pick inequality
because
\begin{equation*}
\rho(0,1/2)=\frac12
<\frac{40}{41}=\rho(4/5,-4/5).
\end{equation*}
Equivalently, the scalar Pick matrix is
\begin{equation*}
\begin{bmatrix}9/25&41/25\\41/25&12/25\end{bmatrix},
\qquad \det=-\frac{1573}{625}<0.
\end{equation*}
Thus the whole affine segment is exactly infeasible, not merely missed by a
chosen controller ansatz.

The infeasibility certificate does not use a coupling obstruction.  The
matrix $H$ is real symmetric, so a real orthogonal $O$ diagonalizes it:
$O^THO=\Lambda$.  The constant bases $L=O^TS^{-1}$ and $R=O$ give
\begin{equation*}
LP_\lambda R
=\lambda n(z)\bigl(((1-\lambda)+\lambda d(z))I_2+\lambda n(z)\Lambda\bigr)^{-1},
\end{equation*}
which is diagonal for every $\lambda$.  A fixed change of input--output bases
therefore splits the segment into two scalar channels.  The Schwarz--Pick
argument is independent of that splitting.

The three instances certify, respectively, a repeated-zero feasible seed, a strict inclusion of the single-square-root comparison class, and an exact infeasibility obstruction.  They do not constitute a complexity study.

\section{Scope and conclusion}
For every fixed finite $n\geq1$, simultaneous internal stabilization of a
nondegenerate square affine right-coprime factor segment reduces, by an
inherited coprime-factor normalization, to emptiness or nonemptiness of
$\mathfrak A_{\mathrm{adm}}^{(n)}$.  A function-level double-Cayley identity
then unlocks the spectral-cut constraint on that set: every admissible
matrix is, up to unimodular similarity over the stable rational ring, a
product of two Cayley transforms of strictly Schur functions.  Complete
Smith--Hermite jets, together with one additional Taylor coefficient at infinity,
are the finite reconstruction interface between this factorization and a
proper real-rational controller.  The two remaining constraints compress
losslessly into one finite-dimensional semialgebraic seed.  Over the
effective coefficient field of Corollary~\ref{cor:algorithm}, that seed
yields an exact existence decision that does not presuppose or search over
a controller McMillan-degree bound.  Ranging over admissible seeds of the
same encoding yields a surjective atlas of all proper real-rational common
stabilizers.

The theorems do not settle general MIMO simultaneous stabilization, do not
provide a unique unconstrained Youla parameter, and do not imply
semialgebraicity for arbitrary plant families.  Here $n$ is the
input--output channel size rather than the system order, and affinity of
the factors is not affinity of the plants.  Quantifier elimination may be
costly, and the examples of Section~\ref{sec:examples} are small exact
certificates rather than a complexity study.  These restrictions are
essential: the exact decision result is compatible with the rational
undecidability of simultaneous stabilization of three general linear plants
\cite{BlondelGevers1993undecidable}.

What remains outside the present theorems includes identically singular
endpoint blocks, multiparameter families, rectangular plants, degree
control, and practical solvers.

The priority claim is therefore limited to the stated function-level
factorization and its lossless combination with the stated nondegenerate
square affine coprime-factor segment for every finite $n$.

\appendix
\input{appendix-proofs}

\bibliographystyle{plain}
\bibliography{references}

\end{document}

%% file: appendix-proofs.tex
\section{Proof of Theorem~\ref{thm:double-cayley}}
\label{app:double-cayley}

The reverse inclusion is the argument in Section~\ref{sec:cayley}: the
Cayley identity after \eqref{eq:car}, Lemma~\ref{lem:accretive-product}, and
preservation of spectra under similarity.  It does not depend on $n=2$.
This appendix records only the forward-inclusion estimates deferred there:
a polynomial approximation of the principal square root producing commuting
positive-stable rational factors, and the rationality of a common Lyapunov
metric so that one unimodular $U$ serves both factors on the closed disk.

Let $\mathcal R=\mathcal{RH}_\infty(\mathbb D)$ and $F^\sim(z)=F(1/\bar z)^*$
be as in Section~\ref{sec:cayley}.  Write $\mathscr S_{n,\mathrm{rat}}^\circ$
and $\mathscr A_{n,\mathrm{rat}}^\circ$ for the strict rational Schur and
strictly accretive classes of that section, identified by
\begin{equation}
P=(I_n+Q)(I_n-Q)^{-1},\qquad Q=(P-I_n)(P+I_n)^{-1}
\label{eq:supp-cayley}
\end{equation}
and
\begin{equation*}
P+P^*=2(I_n-Q^*)^{-1}(I_n-Q^*Q)(I_n-Q)^{-1}.
\end{equation*}

\begin{lem}[Commuting positive-stable factors]
\label{lem:supp-commuting-factor}
Suppose $A\in\mathcal R^{n\times n}$ and
$\sigma(A(z))\cap(-\infty,0]=\emptyset$ on the closed disk.  Then there
are commuting $X,Y\in\mathcal R^{n\times n}$ such that $A=XY$ and the
spectra of $X(z)$ and $Y(z)$ lie in the open right half-plane on the
closed disk.  The construction uses Mergelyan approximation of the
principal square root \cite{Rudin1987real,Higham2008functions}.
\end{lem}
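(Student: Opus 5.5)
The plan is to take $X=p(A)$ for a real polynomial $p$ that approximates the principal square root on the pointwise spectra of $A$, and to set $Y=X^{-1}A$. Both factors are then functions of the single matrix $A$, so they commute automatically. Their eigenvalues are controlled by the spectral mapping theorem applied fiberwise at each $z$ in the closed disk.

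\emph{Step 1: a compact spectral set in the slit plane.} Put
\[
K=\bigcup_{\lvert z\rvert\leq1}\sigma(A(z)).
\]
Eigenvalues depend continuously on $z$ and the closed disk is compact, so $K$ is compact. By hypothesis $K$ misses $(-\infty,0]$.

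\emph{Step 2: fill the holes of $K$.} Let $\widehat K$ be $K$ together with all bounded components of $\mathbb C\setminus K$. The ray $(-\infty,0]$ is connected, unbounded and disjoint from $K$, so it lies in the unbounded component of $\mathbb C\setminus K$. Hence $\widehat K$ is still a compact subset of $\Omega=\mathbb C\setminus(-\infty,0]$, and $\mathbb C\setminus\widehat K$ is connected. Because $A$ is real-rational, $\overline{A(z)}=A(\bar z)$, so $K$ and $\widehat K$ are invariant under complex conjugation.

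\emph{Step 3: polynomial approximation of the square root.} The principal root $f(\mu)=\mu^{1/2}$ is holomorphic on $\Omega$. Compactness of $\widehat K$ gives $c>0$ with
\[
\operatorname{Re}f\geq c\quad\text{and}\quad \lvert f\rvert\geq c\quad\text{on }\widehat K .
\]
Runge's theorem, or Mergelyan's theorem as cited, gives a polynomial $p$ with $\lvert p-f\rvert<\varepsilon$ on $\widehat K$. Since $f(\bar\mu)=\overline{f(\mu)}$ and $\widehat K$ is conjugation-invariant, the polynomial $\tfrac12\bigl(p(\mu)+\overline{p(\bar\mu)}\bigr)$ has real coefficients and obeys the same bound, so we replace $p$ by it.

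\emph{Step 4: locating the two spectra.} For small $\varepsilon$ we have $\operatorname{Re}p\geq c/2$ and $p\neq0$ on $\widehat K$. Moreover,
\[
\left\lvert\frac{\mu}{p(\mu)}-f(\mu)\right\rvert
=\frac{\lvert f(\mu)\rvert\,\lvert f(\mu)-p(\mu)\rvert}{\lvert p(\mu)\rvert}
\leq C\varepsilon ,
\]
so $\operatorname{Re}\bigl(\mu/p(\mu)\bigr)>0$ on $\widehat K$ as well. By the spectral mapping theorem, $\sigma(X(z))=p(\sigma(A(z)))$ lies in the open right half-plane for every $\lvert z\rvert\leq1$. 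Since $X$ and $A$ commute, $\sigma(Y(z))=\{\mu/p(\mu):\mu\in\sigma(A(z))\}$, and this also lies in the open right half-plane.

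\emph{Step 5: membership in $\mathcal R^{n\times n}$.} $X=p(A)$ has real coefficients and is analytic on the same neighborhood of $\overline{\mathbb D}$ as $A$, so $X\in\mathcal R^{n\times n}$. Its determinant $\det X$ is real-rational and has no zeros on the compact closed disk. By continuity it has no zeros on a slightly smaller open neighborhood of $\overline{\mathbb D}$, so $X^{-1}\in\mathcal R^{n\times n}$. Then $Y=X^{-1}A\in\mathcal R^{n\times n}$, $A=XY$, and $XY=YX$ because both are functions of $A$.

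\emph{Main obstacle.} The delicate step is Step 2: polynomial (rather than rational) approximation requires the approximation set to have connected complement while still avoiding the branch cut. The ray argument settles this. The remaining care is in choosing $\varepsilon$ uniformly from the single constant $c$, which compactness of $\widehat K$ supplies.
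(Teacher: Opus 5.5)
Your proposal is correct and follows the paper's proof step for step. It uses the same hull of $\bigcup_{\lvert z\rvert\leq1}\sigma(A(z))$, kept off the cut because the ray connects the cut to infinity, and the same conjugation-symmetrized polynomial approximation of the principal square root. It also uses the same estimate $\lvert\mu/p(\mu)-f(\mu)\rvert=\lvert f(\mu)\rvert\,\lvert f(\mu)-p(\mu)\rvert/\lvert p(\mu)\rvert$ and the same factors $X=p(A)$, $Y=X^{-1}A$.

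One wording point: $\sigma(Y(z))=\{\mu/p(\mu):\mu\in\sigma(A(z))\}$ holds because $Y(z)$ is the rational function $\mu\mapsto\mu/p(\mu)$ of $A(z)$ (spectral mapping, or a common Schur triangularization). Commutativity of $X$ and $A$ alone does not give it.
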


\begin{proof}
The argument is the standard combination of a polynomial hull, Mergelyan
approximation of the principal square root
\cite{Rudin1987real,Higham2008functions}, and spectral mapping; it is
recorded under the disk conventions of the main text.

Let
$\mathcal K_A=\bigcup_{\lvert z\rvert\leq1}\sigma(A(z))$, a compact subset of
$\Omega=\mathbb C\setminus(-\infty,0]$.  In one complex variable the
polynomial hull fills only bounded complementary components.  Every point
of the cut may be joined to infinity along the cut, so the hull remains in
$\Omega$.  The hull is compact in the open set $\Omega$, so it stays at
positive distance from the cut.  Let $g(\lambda)=\sqrt{\lambda}$ be the
principal square root on $\Omega$.
The function is holomorphic on a neighborhood of $\widehat{\mathcal K}_A$,
and $\mathbb C\setminus\widehat{\mathcal K}_A$ is connected.  The principal
square root itself need not be rational \cite{Higham2008functions}; it is
used only as an approximation target.  By Mergelyan's theorem
\cite{Rudin1987real}, there are polynomials converging uniformly to $g$ on
$\widehat{\mathcal K}_A$.  Because $\mathcal K_A$ is invariant under
conjugation and $g(\bar\lambda)=\overline{g(\lambda)}$, the symmetrization
$(p+p^{\mathrm c})/2$ with $p^{\mathrm c}(\lambda)=\overline{p(\bar\lambda)}$
permits a real polynomial without enlarging the uniform error.  Put
\begin{equation*}
\begin{split}
\delta&=\min_{\lambda\in\mathcal K_A}\operatorname{Re}g(\lambda)>0,\\
m&=\min_{\lambda\in\mathcal K_A}\lvert g(\lambda)\rvert>0,
\qquad M=\max_{\lambda\in\mathcal K_A}\lvert g(\lambda)\rvert.
\end{split}
\end{equation*}
The three extrema are attained and strictly positive because $g$ is
continuous, $\mathcal K_A$ is compact, and $g(\Omega)$ lies in the open
right half-plane.  Choose a real polynomial $p$ so that
\begin{equation*}
\max_{\lambda\in\mathcal K_A}\lvert p(\lambda)-g(\lambda)\rvert
<\min\{\delta/2,m/2,\delta m/(4M)\}.
\end{equation*}
Then $\operatorname{Re}p(\lambda)>\delta/2$ and $\lvert p(\lambda)\rvert>m/2$.
Moreover
\begin{equation*}
\left\lvert\frac{\lambda}{p(\lambda)}-g(\lambda)\right\rvert
=\frac{\lvert g(\lambda)\rvert\,\lvert g(\lambda)-p(\lambda)\rvert}
{\lvert p(\lambda)\rvert}
<\delta/2,
\end{equation*}
so $\operatorname{Re}(\lambda/p(\lambda))>\delta/2$.  Set
\begin{equation*}
X=p(A),\qquad Y=X^{-1}A.
\end{equation*}
Polynomial spectral mapping \cite{Higham2008functions} shows that the
eigenvalues of $X(z)$ are the values $p(\lambda)$ for
$\lambda\in\sigma(A(z))$, hence lie in the open right half-plane and avoid
zero.  Thus $X$ is unimodular in $\mathcal R$.  Rational spectral mapping
gives the eigenvalues $\lambda/p(\lambda)$ of $Y(z)$, so both factors are
positive stable on the closed disk.  Since $X$ is a polynomial in $A$, the
factors commute and $XY=A$.  Diagonalizability of $A(z)$ is not used, and
the construction does not depend on $n=2$.
\end{proof}

\begin{lem}[Standard commuting metric]
\label{lem:supp-common-metric}
Let commuting $X,Y\in\mathcal R^{n\times n}$ be positive stable on the
closed disk.  There exists $U\in\mathrm{GL}_n(\mathcal R)$ such that
$UXU^{-1}$ and $UYU^{-1}$ are strictly accretive on the closed disk.
The construction combines the commuting constant-matrix Lyapunov method of
\cite{Narendra1994common}, applied fiberwise on the circle, with rational
spectral factorization \cite{Baggio2016factorization}.
\end{lem}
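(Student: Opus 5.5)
The plan is to build a rational Hermitian matrix $H$ on the unit circle that is a common Lyapunov metric for $X$ and $Y$ on the closed disk, and then factor $H=U^\sim U$ with $U$ unimodular. The first step follows the constant-matrix construction of \cite{Narendra1994common}. For each fixed $z$ with $\lvert z\rvert\leq1$, commuting positive-stable matrices $X(z),Y(z)$ admit a common solution $H(z)>0$ of
\[
X(z)^*H(z)+H(z)X(z)>0,\qquad Y(z)^*H(z)+H(z)Y(z)>0 .
\]
It is obtained from the nested Lyapunov equations $X^*H_1+H_1X=I_n$ and $Y^*H+HY=H_1$. Commutativity is what makes $H$ also satisfy $X^*H+HX>0$: one checks this by applying the linear map $H\mapsto X^*H+HX$ to the second equation.

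For each fixed $z$, the Lyapunov operators $\mathcal L_{X}=I\otimes X^*+X^T\otimes I$ (vectorized) are invertible, because $X(z)$ is positive stable. Hence $\operatorname{vec}H_1=\mathcal L_X^{-1}\operatorname{vec}I_n$, and $H_1$ is a rational function of the entries of $X$, with denominator $\det\mathcal L_X$ nonvanishing on the closed disk.

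I would not take this $H$ directly, since on the circle I need a para-Hermitian rational function. Instead I would apply the construction to the para-Hermitian symbols. Concretely, replace $X^*$ by $X^\sim$ and solve
\[
X^\sim H_1+H_1X=I_n,\qquad Y^\sim H+HY=H_1 .
\]
On $\lvert z\rvert=1$ these equations coincide with the pointwise equations. Their solutions are rational, para-Hermitian, and free of poles on an annulus around the circle, because the spectra of $X(z)$ and $-X(z)^*$ stay separated there. So $H$ is rational, $H^\sim=H$, and $H>0$ on the circle.

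The second step is to factor $H$. A rational spectral factorization \cite{Baggio2016factorization} gives $H=U^\sim U$ with $U,U^{-1}\in\mathcal R^{n\times n}$, because $H$ is positive definite on the whole circle, including at $z=\pm1$. Then on the circle
\[
(UXU^{-1})^*+UXU^{-1}=U^{-*}(X^*H+HX)U^{-1}>0,
\]
and the same holds for $Y$.

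The third step, which I expect to be the main obstacle, is to propagate strict accretivity from the circle to the interior. Pointwise accretivity of an analytic matrix function is not a harmonic property. However, for each unit vector $v$ the function $v^*(F+F^*)v/2=\operatorname{Re}(v^*Fv)$ is harmonic in $z$, since $v^*F(z)v$ is analytic. Its minimum over the closed disk is therefore attained on the circle, where it is positive. Taking the minimum over compact $v$ and $z$ gives a uniform positive margin.

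The same argument runs for $Y$. The fiberwise metric at interior points is never needed, because the inequality is transported by harmonicity, not by the Lyapunov equations. What remains to check carefully are these points:
\begin{itemize}
\item the para-Hermitian Lyapunov equations are uniquely solvable near the circle;
\item the solution $H$ is real-rational, which follows by conjugate symmetry of the real data;
\item the spectral factor is real with a unimodular inverse.
\end{itemize}
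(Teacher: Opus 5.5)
Your proposal is correct and follows the paper's proof essentially step for step: nested Lyapunov equations whose Lyapunov operators commute because $XY=YX$ (you solve for $X$ first and then $Y$, the paper the other way round), paraconjugate replacement to obtain a real-rational para-Hermitian $H$ with no pole on the circle, the spectral factorization $H=U^\sim U$, and transfer of accretivity from the circle to the closed disk by the minimum principle for $\operatorname{Re}\, v^*UXU^{-1}v$. The one step to write out is uniqueness: $\mathcal L_Y(\mathcal L_X(H))=I_n$ forces $\mathcal L_X(H)$ to be the unique solution of the $Y$-Lyapunov equation with right side $I_n$, and that solution is positive definite.
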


\begin{proof}
Fix a point on the unit circle, and suppress its argument.  Positive
stability gives unique positive-definite solutions of
\begin{equation}
Y^*R_Y+R_YY=I_n,\qquad X^*R_X+R_XX=I_n,
\qquad X^*H+HX=R_Y.
\label{eq:common-metric-equations}
\end{equation}
For example,
$R_Y=\int_0^\infty e^{-Y^*t}e^{-Yt}\,dt$ and
$H=\int_0^\infty e^{-X^*t}R_Ye^{-Xt}\,dt$.
The integrals converge and show strict positivity.  Let
$\mathcal L_X(Z)=X^*Z+ZX$ and $\mathcal L_Y(Z)=Y^*Z+ZY$.
Since $XY=YX$, these two linear operators commute.  Thus
\[
\mathcal L_X(\mathcal L_Y(H))
=\mathcal L_Y(\mathcal L_X(H))
=\mathcal L_Y(R_Y)=I_n.
\]
Uniqueness of the $X$-Lyapunov solution yields
$Y^*H+HY=R_X>0$.  Hence the same $H$ is a strict metric for both factors.

To prove rational dependence on the circle variable, replace the adjoints
in \eqref{eq:common-metric-equations} by paraconjugates.  For
$F=X$ or $Y$, vectorization of $F^\sim R+RF=G$ gives
\[
(I_n\otimes F^\sim+F^T\otimes I_n)\operatorname{vec}R
=\operatorname{vec}G.
\]
On the circle the eigenvalues of its coefficient matrix are sums of
eigenvalues of $F$ and their conjugates, with positive real parts.
The coefficient matrix therefore has a rational inverse with no circle
pole.  Successive application first to $R_Y$ and then to $H$ proves
that $H$ is real-rational and has no pole on the circle.  The pointwise
Hermitian solutions and rational identity imply $H^\sim=H$.
Continuity and compactness give $H(\zeta)\geq\varepsilon I_n$ on the
circle for some $\varepsilon>0$.

The rational spectral factorization theorem
\cite{Baggio2016factorization} now gives
\[
H=U^\sim U,\qquad U,U^{-1}\in\mathcal R^{n\times n}.
\]
Here $H$ has normal rank $n$, has no circle pole, and
$H\succeq\varepsilon I_n$ on the circle; these are the hypotheses that
permit the closed disk as a weakly unmixed-symplectic pole-and-zero
region (equivalently, invert the variable in the exterior convention).
The real-coefficient form of the theorem gives real-rational $U$.
Rationality then places $U$ and $U^{-1}$ in a neighborhood of the closed
disk, hence in $\mathcal R$.  Interior poles of $H$ are allowed: the metric
is used only on the circle.
On the circle,
\[
\begin{split}
UXU^{-1}+(UXU^{-1})^*
&=U^{-*}(X^*H+HX)U^{-1}>0,\\
UYU^{-1}+(UYU^{-1})^*
&=U^{-*}(Y^*H+HY)U^{-1}>0.
\end{split}
\]
Both similar matrices are analytic near the closed disk.  For a fixed unit
vector $v$, the real part of $v^*UXU^{-1}v$ is harmonic; its uniform
positive circle bound extends to the disk by the minimum principle.
The same argument applies to $Y$, proving the assertion.
\end{proof}

Neither lemma uses $n=2$: spectral mapping and vectorization are
dimension-independent.  Together they give the forward inclusion of
Theorem~\ref{thm:double-cayley}: Lemma~\ref{lem:supp-commuting-factor}
writes $A=XY$ with commuting positive-stable factors,
Lemma~\ref{lem:supp-common-metric} supplies one $U\in\mathrm{GL}_n(\mathcal R)$
making $P=UXU^{-1}$ and $Q=UYU^{-1}$ strictly accretive, and
\eqref{eq:supp-cayley} produces $Q_1,Q_2\in\mathscr S_{n,\mathrm{rat}}^\circ$
with
\begin{equation*}
\begin{split}
U^{-1}\mathcal C(Q_1)\mathcal C(Q_2)U
&=U^{-1}(UXU^{-1})(UYU^{-1})U\\
&=XY=A.
\end{split}
\end{equation*}

\section{Convolution form and truncated jet algebra}
\label{app:smith}

The node set \eqref{eq:nodes}, the local Smith chains
\eqref{eq:smith}--\eqref{eq:root-chain}, the cancellation identities
\eqref{eq:smith-hermite}, the infinity formula \eqref{eq:D-infty}, the jet
lengths $m_j$ and $m_\infty$, and the radial return \eqref{eq:radial-return}
are those of Section~\ref{sec:smith}; they are not re-proved.  Relative to
the running example $n=2$, the only changes of size are that
$\widehat{\mathcal M}$ is $2n\times 2n$ (hence $2n$ root chains rather than
four) and that the reconstruction $[\,I_n\;\widehat A\,]\widehat{\mathcal M}^{-1}$
is $n\times 2n$.  The argument that right multiplication by the invertible
germ $L_j$ neither creates nor cancels polar parts is unchanged.  This
appendix records only the truncated algebra that evaluates those identities
on a finite seed without first constructing global interpolants.

Expand $\nu_{j\ell}=\sum_{q}\nu_{j\ell,q}t^q$ and
$v_{j\ell}=\sum_{q}v_{j\ell,q}t^q$.  The cancellation conditions
\eqref{eq:smith-hermite} become the finite convolutions
\begin{equation}
[t^q]c_{j\ell}
=\nu_{j\ell,q}+\sum_{p=0}^q A_pv_{j\ell,q-p},
\label{eq:supp-c-conv}
\end{equation}
where $A_p$ are the normalized coefficients of $\widehat A$ at $\zeta_j$.
With the Smith data of $\widehat{\mathcal M}$ regarded as known, these are
affine-linear equations in the unknown jets of $\widehat A$.  They do not
constrain $\widehat A$ at other points.  If instead $B_p$ denotes the
interior coefficient of $B(w)$ at $\alpha_j=\zeta_j/r$, radial return
gives $A_p=r^{-p}B_p$, and the same equations read
\[
\nu_{j\ell,q}+\sum_{p=0}^q r^{-p}B_pv_{j\ell,q-p}=0.
\]
The coefficients $A_p$ in \eqref{eq:supp-c-conv} are already expressed
at the original node and are not scaled a second time.

\subsection{Truncated jet algebra}

Write
\begin{equation*}
\mathbf U(\eta)=\sum_{q=0}^{m-1}U_q\eta^q
\end{equation*}
for a recorded jet of length $m$ with $\det U_0\neq0$.  The inverse jet
$\sum_{q}V_q\eta^q$ is determined recursively by
\begin{equation}
\begin{split}
V_0&=U_0^{-1},\\
V_q&=-U_0^{-1}\sum_{p=1}^q U_pV_{q-p}
\qquad(q\geq1),
\end{split}
\label{eq:supp-inv-jet}
\end{equation}
so the coefficient of order $q$ depends only on input data of order at most
$q$.  Products are finite convolutions at the same order:
\begin{equation*}
[t^q](FG)=\sum_{a=0}^q F_aG_{q-a}.
\end{equation*}
The Cayley transform \eqref{eq:car} is a rational combination of one product
and one inverse, hence a finite operation on a truncated jet whenever
$I_n-Q_0$ is invertible.  In particular, if $Q=\gamma W$ with $0<\gamma<1$
and $W$ a Schur jet, then $I_n-Q_0$ is invertible.  Forming the seed
combination \eqref{eq:B-interior} in this truncated ring and returning
$\widehat A(z)=B(z/r)$ produces original-node coefficients $r^{-q}B_{j,q}$
that may be substituted directly into \eqref{eq:smith-hermite} and
\eqref{eq:D-infty}.  No global interpolant is required for that substitution.
The algebra does not depend on $n=2$: only the matrix size of each jet
changes.

\section{Unimodular Hermite lifting in every finite dimension}
\label{app:hermite}

This appendix proves Proposition~\ref{prop:hermite-lift}.  Notation is that
of Section~\ref{sec:smith}.  The scalar determinant lift below does not
depend on $n$.  The only additional step for $n\geq2$ is the classical
identity $\mathrm{SL}_n(B)=E_n(B)$ for commutative semilocal rings
\cite{Bass1968algebraic}, which is invoked, not re-proved.  When $n=1$ that
factor is absent: the scalar lift is already the matrix lift.

\begin{proof}[Proof of Proposition~\ref{prop:hermite-lift}]
Necessity is elementary.  If $U\in\mathrm{GL}_n(\mathcal R)$ matches the
jets, then $\det U$ is continuous, real-valued, and nonzero on the compact
interval $[-1,1]$, hence of constant sign.  At each real node the zeroth-order
jet is a real matrix, so $\det U_{\alpha,0}$ is real and nonzero, and all
such determinants must share that sign.

If the node set is empty, take $U=I_n$.  No interpolation constraint remains
and no quotient ring is needed.  The rest of the argument treats a nonempty
finite node set $\Lambda\subset\overline{\mathbb D}$, invariant under
conjugation, with conjugate-symmetric invertible jets
\begin{equation*}
\mathbf U_\alpha(t)=\sum_{q=0}^{m_\alpha-1}U_{\alpha,q}t^q,
\qquad \det U_{\alpha,0}\neq0.
\end{equation*}

Let
\begin{equation*}
p(z)=\prod_{\alpha\in\Lambda}(z-\alpha)^{m_\alpha}\in\mathbb R[z],
\qquad B=\mathbb R[z]/(p).
\end{equation*}
The Chinese remainder theorem decomposes $B$ into truncated real Taylor rings
at the real nodes and, for each nonreal conjugate pair, a truncated complex
Taylor ring viewed as a real algebra.  Two polynomial matrices realize the
same jets if and only if they are congruent modulo $p$, so the data determine
a class $\overline{U}\in B^{n\times n}$.  In each local factor an element is a unit
if and only if its constant residue is nonzero: writing $a=a_0(1+\varepsilon)$
with $\varepsilon$ nilpotent, the inverse is the finite sum
$a_0^{-1}\sum_{k\geq0}(-\varepsilon)^k$.  The hypothesis
$\det U_{\alpha,0}\neq0$ at every node therefore yields
$\overline{U}\in\mathrm{GL}_n(B)$ and
$\delta=\det\overline{U}\in B^\times$.

The ring $B$ is a finite-dimensional commutative real algebra, hence Artin,
and has only finitely many maximal ideals.  An element of $B$ is a unit if
and only if it lies in none of those ideals.

Let $\sigma\in\{1,-1\}$ be the common sign of $\det U_{\alpha,0}$ at the real
nodes, or $\sigma=1$ if there is no real node.  In each real local factor the
constant term of $\sigma\delta$ is positive, so one may write
$a_0(1+\varepsilon)$ with $a_0>0$ and $\varepsilon$ nilpotent, and define the
truncated logarithm
\begin{equation*}
\log\bigl(a_0(1+\varepsilon)\bigr)
=\log a_0+\sum_{k\geq1}\frac{(-1)^{k+1}}{k}\varepsilon^k.
\end{equation*}
Nilpotence makes the series finite.  In each nonreal local factor choose any
complex logarithm of the nonzero constant term and, on the conjugate factor,
the conjugate value; the nilpotent correction is again the same finite
series.  The Chinese remainder theorem supplies $\ell\in B$ with
$e^\ell=\sigma\delta$.

Choose a real polynomial representative $h$ of $\ell$ and set
$f=\sigma e^h$.  Then $f$ is entire and nowhere zero, and its jets modulo $p$
equal $\delta$.  Let $H$ be the Hermite polynomial representative of $\delta$,
of degree strictly less than $\deg p$.  The quotient
\begin{equation*}
g=\frac{f-H}{p}
\end{equation*}
has removable singularities at the nodes, hence extends to an entire
function, and is conjugate-symmetric.  Its Taylor polynomials at the origin
therefore have real coefficients and approximate $g$ uniformly on the closed
disk.  Choose a real polynomial $q$ so close to $g$ that
\begin{equation*}
\max_{\lvert z\rvert\leq1}\lvert p(z)\bigl(q(z)-g(z)\bigr)\rvert
<\min_{\lvert z\rvert\leq1}\lvert f(z)\rvert.
\end{equation*}
The minimum on the right is positive because $f$ is continuous and nowhere
zero on a compact set.  Put $d=H+pq$.  Then $d\equiv\delta\pmod p$ and
$\lvert d-f\rvert<\lvert f\rvert$ throughout the closed disk, so
\begin{equation*}
\lvert d\rvert\geq\lvert f\rvert-\lvert d-f\rvert>0
\qquad(\lvert z\rvert\leq1).
\end{equation*}
Thus $d$ has no zero on the closed disk.  (The same comparison on the unit
circle also yields the conclusion by Rouch\'e's theorem, together with the
strict inequality on the circle itself.)  In particular
$d,d^{-1}\in\mathcal R$.

When $n=1$, the matrix $\overline{U}$ is already the unit $\delta$, and the
scalar polynomial $d$ matches the prescribed (scalar) jets.  Set $U=d$.
Then $U\in\mathrm{GL}_1(\mathcal R)$, and the lift is complete.  No
$\mathrm{SL}_n$ factor is required.

Now assume $n\geq2$.  Set
\begin{equation*}
\overline{S}=\overline{U}\operatorname{diag}(\delta^{-1},1,\ldots,1)
\in\mathrm{SL}_n(B).
\end{equation*}
The identity $\mathrm{SL}_n(B)=E_n(B)$ for the commutative semilocal
(in fact Artin) ring $B$ is classical
\cite{Bass1968algebraic}.  Thus $\overline{S}$ is a finite product of
elementary matrices $E_{ij}(a)=I_n+a\,e_ie_j^T$.  Lift each elementary
parameter $a\in B$ to a real polynomial representative.
The corresponding product $S\in\mathrm{SL}_n(\mathbb R[z])$ reduces to
$\overline{S}$ modulo $p$.  No determinant error is introduced: each lifted
elementary matrix still has determinant $1$ exactly.  Set
\begin{equation*}
U=S\operatorname{diag}(d,1,\ldots,1).
\end{equation*}
Modulo $p$ one has
$U\equiv\overline{S}\operatorname{diag}(\delta,1,\ldots,1)=\overline{U}$, so
all prescribed jets are matched, including every higher-order coefficient
subject only to conjugate symmetry.  Moreover $\det U=d$ has no zero on the
closed disk and $U$ is a polynomial matrix, hence
$U,U^{-1}\in\mathcal R^{n\times n}$.

The two directions together are the stated criterion.  The only obstruction
is the real-node sign of the zeroth-order determinants; higher-order jets
create none.
\end{proof}

The truncated logarithms and the holomorphic approximation of $g$ are used
only to prove existence.  They are not steps of an exact algebraic
procedure.  The next lemma records a finite terminating construction over an
effectively presented real closed field.  The object enumerated is an
auxiliary scalar polynomial, not a controller McMillan degree: the jets are
already fixed, and no degree bound on $U$ is an input.

\begin{lem}[Effective Hermite lift]\label{lem:supp-hermite-effective}
Suppose the conjugate-symmetric node polynomial and the prescribed jets in
Proposition~\ref{prop:hermite-lift} are represented over an effectively
presented real closed field $\mathbb F\subset\mathbb R$, allowing finite
algebraic extensions for nonreal nodes.  If the determinant-sign condition
holds, a matching $U\in\mathrm{GL}_n(\mathbb F(z)\cap\mathcal R)$ can be
found by a finite terminating exact procedure.
\end{lem}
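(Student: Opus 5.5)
The plan is to make each non-constructive step in the proof of Proposition~\ref{prop:hermite-lift} exact. That proof used two analytic devices: the truncated logarithm with the entire function $f=\sigma e^h$, and holomorphic approximation of $g=(f-H)/p$. Both will be replaced by a search over polynomials with coefficients in $\mathbb F$. The $\mathrm{SL}_n$ step is already algebraic.

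\emph{Step 1: algebraic data.} Over $\mathbb F$, with a finite algebraic extension adjoined for nonreal nodes, I form the node polynomial $p$ and the ring $B=\mathbb F[z]/(p)$. Gathering the prescribed jets by Hermite interpolation gives a polynomial matrix representative $\overline U\in\mathrm{GL}_n(B)$. I then compute $\delta=\det\overline U$ and its Hermite representative $H$ with $\deg H<\deg p$. Conjugate symmetry makes these coefficients lie in $\mathbb F$ itself.

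\emph{Step 2: enumeration of the scalar lift.} Enumerate real polynomials $q\in\mathbb F[z]$ in a fixed order, for example by degree and then by height of coefficients in a countable dense subfield such as $\mathbb Q$ or $\mathbb Q$-algebraic numbers in $\mathbb F$. For each $q$, set $d=H+pq$ and test exactly whether $d$ has a zero in the closed disk. This test is a finite first-order sentence over $\mathbb F$ (for instance $\exists x,y:\ x^2+y^2\le1,\ \operatorname{Re}d(x+iy)=\operatorname{Im}d(x+iy)=0$), decided by quantifier elimination; root isolation with a Schur--Cohn count would also work. Stop at the first $q$ that passes.

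Termination is the main obstacle, and I would settle it through the existence proof of Proposition~\ref{prop:hermite-lift}, read as an openness statement. That proof produces an entire $g$ such that every polynomial $q$ satisfying $\max_{|z|\le1}|p(q-g)|<\min_{|z|\le1}|f|$ gives a zero-free $d$. This condition defines a nonempty open set in the coefficient space of polynomials of some sufficiently large degree $N$. It is nonempty because the degree-$N$ Taylor polynomials of $g$ lie in it, and it is open because the sup-norm on the disk is continuous in the coefficients. Rational coefficient vectors are dense, so some enumerated $q$ lies in this set, and the search halts. The sign hypothesis is used exactly here, since it is what makes $f=\sigma e^h$ available. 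The analytic functions $f,g$ appear only in this termination argument and are never computed.

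\emph{Step 3: $\mathrm{SL}_n$ factor.} Set $\overline S=\overline U\operatorname{diag}(\delta^{-1},1,\dots,1)\in\mathrm{SL}_n(B)$ and factor it into elementary matrices effectively. Because $B$ splits by the Chinese remainder theorem into local Artin factors, this reduces to Gaussian elimination over each local factor: pick a pivot entry whose constant residue is a unit, invert it by the finite nilpotent series, and clear its row and column. This is exact arithmetic over $\mathbb F$. Lift each elementary parameter to a polynomial representative over $\mathbb F$ to get $S$. Then $U=S\operatorname{diag}(d,1,\dots,1)$ matches every prescribed jet, has $\det U=d$ zero-free on the closed disk, and has polynomial entries over $\mathbb F$, so $U\in\mathrm{GL}_n(\mathbb F(z)\cap\mathcal R)$. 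When $n=1$, take $U=d$ directly.
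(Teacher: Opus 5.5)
Your proposal is correct and follows essentially the paper's route: enumerate rational $q$, decide zero-freeness of $d=H+pq$ on the closed disk by real quantifier elimination, get termination from openness and density around the lift supplied by Proposition~\ref{prop:hermite-lift}, and finish with an exact elementary factorization of $\overline S$ and $U=S\operatorname{diag}(d,1,\ldots,1)$. Your open set is cut out by the approximation inequality rather than by zero-freeness near $q_0$ as in the paper, which amounts to the same thing. Two small precisions: the enumeration must have finite levels (for instance, order by degree plus height), since a lexicographic order with degree as primary key never leaves degree zero; and your local Gaussian elimination ends at a diagonal unit matrix of determinant one, which is elementary by the Whitehead lemma.
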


\begin{proof}
If the node set is empty, return $U=I_n$.  Otherwise the existence argument
of Proposition~\ref{prop:hermite-lift} produces a real polynomial $q_0$ of
some finite degree for which $d_0=H+pq_0$ has no zero in the closed disk.

Zero-freeness is open in the coefficients: on the compact disk,
$\lvert d_0\rvert$ attains a positive minimum, so every sufficiently close
polynomial of the same degree remains zero-free there.  Rational
coefficients are dense in that finite-dimensional real coefficient space.
Enumerate $q\in\mathbb Q[z]$ by degree and coefficient height.  For each
candidate, the assertion that $d=H+pq$ is zero-free on the closed disk is
the following first-order sentence over $\mathbb F$:
\begin{equation*}
\neg\,\exists x,y\quad
\left\{
\begin{array}{l}
x^2+y^2\leq1,\\
\operatorname{Re}d(x+iy)=0,\\
\operatorname{Im}d(x+iy)=0.
\end{array}
\right.
\end{equation*}
Real quantifier elimination, in the Tarski--Seidenberg and cylindrical
algebraic decomposition lineage
\cite{Tarski1951decision,Collins1975QE}, decides the sentence exactly.
Density together with the existence of $q_0$ guarantees that the enumeration
accepts a candidate after finitely many tests.  The enumeration is of the
scalar auxiliary polynomial $q$.  It is not a search over controller
McMillan degree, and it is invoked only after a feasible seed has already
been decided; termination uses the lift whose existence has already been
proved.

If $n=1$, set $U=d$ and stop.  If $n\geq2$, the remaining operations are
finite algebra over $\mathbb F$.  Factor $p$ over the real closed field,
form the Chinese remainder decomposition of $\mathbb F[z]/(p)$, test units
in its finitely many local factors, and factor
$\overline{S}=\overline{U}\operatorname{diag}(\delta^{-1},1,\ldots,1)$ into
elementary matrices by the same classical identity $\mathrm{SL}_n=E_n$
invoked in the proof of Proposition~\ref{prop:hermite-lift}.  Lifting the
resulting elementary parameters gives $S\in\mathrm{SL}_n(\mathbb F[z])$.
Then $U=S\operatorname{diag}(d,1,\ldots,1)$ is the required exact output.
Since the accepted $d$ is zero-free on the closed disk, both $U$ and
$U^{-1}$ belong to $\mathcal R^{n\times n}$.
\end{proof}

\section{Confluent Redheffer interpolation in the present conventions}
\label{app:redheffer}

This is the classical one-sided confluent Nevanlinna--Pick lift on the
disk \cite{BallGohbergRodman1990interpolation}, not a new interpolation
theorem.  The bitangential operator-argument theorem of
\cite[Thm.~1.2]{Ball2018bitangential} is the right half-plane counterpart;
the displayed disk kernel below is not identified with that statement.
Relative to $n=2$, the only change of size is the confluent state dimension
$d=n\sum_j m_j$ and the Kronecker factor $I_n$ in $T$.  Take distinct nodes
$\alpha_j\in\mathbb D$ of multiplicities $m_j$ and target jets
$W_{j,q}\in\mathbb C^{n\times n}$.  The left-sided full-jet data are
$T=\bigoplus_j\bigl(J_{m_j}(\alpha_j)\otimes I_n\bigr)$ and
$X=\operatorname{col}_j(e_1\otimes I_n)$, with $Y$ the stacked coefficients
$W_{j,0},\ldots,W_{j,m_j-1}$.  Here the Jordan block is explicitly
\[
J_m(\alpha)=
\begin{bmatrix}
\alpha&0&\cdots&0\\
1&\alpha&\ddots&\vdots\\
0&\ddots&\ddots&0\\
0&\cdots&1&\alpha
\end{bmatrix},
\qquad J_1(\alpha)=[\alpha].
\]
The ones are on the subdiagonal.  This convention, together with $e_1$
and the increasing Taylor-coefficient order in $Y$, is the one-sided
Lagrange--Sylvester stacking of
\cite[Ch.~16]{BallGohbergRodman1990interpolation}; it encodes the
prescribed jets and makes $(T,X)$ controllable.  Pairing conjugate Jordan
blocks by a fixed real similarity produces real $T,X,Y$ when the data are
conjugate-symmetric.

Let $\Pi$ be the unique solution of the Stein equation in the disk
realization of \cite{BallGohbergRodman1990interpolation},
\begin{equation*}
\Pi-T\Pi T^*=XX^*-YY^*.
\end{equation*}
Assume $\Pi>0$ and set
\begin{equation*}
\begin{split}
\Theta(z)={}&I_{2n}-(1-z)
\begin{bmatrix}X^*\\Y^*\end{bmatrix}(I_d-zT^*)^{-1}\\
&\times \Pi^{-1}(I_d-T)^{-1}[\,X\;-Y\,].
\end{split}
\end{equation*}
Partition $\Theta=[\Theta_{ij}]_{i,j=1}^2$.  The matrix is $J$-inner for
$J=\operatorname{diag}(I_n,-I_n)$ by the disk kernel identity of
\cite{BallGohbergRodman1990interpolation}.  In particular $\Theta_{22}$ is
unimodular over $\mathcal R$, and $\Theta_{21}R+\Theta_{22}$ remains
unimodular for every rational Schur parameter $R$.

\begin{lem}[Standard confluent Redheffer lift]\label{lem:supp-redheffer}
If $\Pi>0$, then as in the disk interpolation theory of
\cite{BallGohbergRodman1990interpolation}, all real-rational Schur
functions matching the prescribed confluent $n\times n$ jets are
\begin{equation*}
W=(\Theta_{11}R+\Theta_{12})
\bigl(\Theta_{21}R+\Theta_{22}\bigr)^{-1},
\qquad \lVert R\rVert_\infty\leq1.
\end{equation*}
The Redheffer denominator is unimodular over $\mathcal R$.  If the data are
conjugate-symmetric, $\Theta$ may be taken real-rational, and a
real-rational interpolant has a real-rational inverse parameter.
\end{lem}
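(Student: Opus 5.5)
The plan is to follow the classical disk realization route of \cite{BallGohbergRodman1990interpolation}. The first step is a direct algebraic kernel identity for the explicitly displayed $\Theta$, derived from the Stein equation $\Pi-T\Pi T^*=XX^*-YY^*$. Write $C(z)=\begin{bmatrix}X^*\\Y^*\end{bmatrix}(I_d-zT^*)^{-1}$. Because every $\alpha_j$ lies in $\mathbb D$, the matrix $I_d-zT^*$ is invertible on a neighborhood of the closed disk, and $I_d-T$ is invertible, so the normalization at $z=1$ is legitimate. Using $\Pi>0$, I expect to verify
\[
J-\Theta(z)J\Theta(w)^*=(1-z\bar w)\,C(z)\Pi^{-1}C(w)^*.
\]
This is a routine but lengthy substitution in which the Stein equation absorbs the cross terms. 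From this identity, $\Theta$ is $J$-unitary on the circle and $J$-contractive on $\overline{\mathbb D}$, and $\Theta$ is rational with poles only at the points $1/\bar\alpha_j$ outside the closed disk, so $\Theta\in\mathcal R^{2n\times2n}$.

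The second step obtains unimodularity of the denominator from $J$-contractivity. The $(2,2)$ block of $\Theta J\Theta^*\le J$, namely $\Theta_{21}\Theta_{21}^*-\Theta_{22}\Theta_{22}^*\le -I_n$, gives $\Theta_{22}\Theta_{22}^*\ge I_n$. Hence $\Theta_{22}$ is invertible on $\overline{\mathbb D}$, and $\lVert\Theta_{22}^{-1}\Theta_{21}\rVert<1$ there. For $\lVert R\rVert_\infty\le1$, the factorization $\Theta_{21}R+\Theta_{22}=\Theta_{22}(I_n+\Theta_{22}^{-1}\Theta_{21}R)$ is then invertible pointwise on the closed disk. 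Since it is rational, it lies in $\mathrm{GL}_n(\mathcal R)$. The standard Redheffer/$J$-contraction computation, $[W\;I]^*$-type quadratic form against $J$, then shows $\lVert W\rVert_\infty\le1$.

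The third step shows that every such $W$ matches the jets. Writing $[\,I\;\;-W\,]\Theta=(\Theta_{21}R+\Theta_{22})^{-1}$-twisted rows is awkward. Instead I would use the left-tangential characterization attached to the Lagrange--Sylvester stacking: an analytic $W$ has the prescribed jets if and only if
\[
\frac{1}{2\pi i}\oint (\zeta I_d-T)^{-1}\bigl(X-YW(\zeta)\bigr)\,d\zeta=0,
\]
equivalently, if and only if $(zI-T)^{-1}(X W\text{-data})$ has no poles at the nodes. I would then check that the range of the linear-fractional map satisfies this condition, because $[\,X\;-Y\,]$ annihilates the relevant residues of $\Theta\begin{bmatrix}R\\I\end{bmatrix}$. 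This step uses only the explicit form of $\Theta$.

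The fourth step, completeness, is the main obstacle. Given any real-rational Schur $W$ with the prescribed jets, I set
\[
\begin{bmatrix}G_1\\G_2\end{bmatrix}=\Theta^{-1}\begin{bmatrix}W\\I_n\end{bmatrix},
\qquad \Theta^{-1}=J\Theta^{\sim}J.
\]
The factor $\Theta^{-1}$ has poles exactly at the nodes $\alpha_j$ of the appropriate orders, and I would show that the interpolation conditions cancel them, so $G_1,G_2\in\mathcal R$. $J$-unitarity on the circle then gives $G_1^*G_1\le G_2^*G_2$ there. A winding/invertibility argument, which I would try first via the maximum principle applied to $G_2^{-1}$ after a small strict-contraction perturbation $W\mapsto\rho W$, $\rho\uparrow1$, makes $G_2$ invertible. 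Then $R=G_1G_2^{-1}$ is Schur and reproduces $W$. For reality, pairing conjugate Jordan blocks by a fixed real similarity makes $T,X,Y$ real. By uniqueness, $\Pi$ is then real, so $\Theta$ has real coefficients, and $R=G_1G_2^{-1}$ is real-rational whenever $W$ is.
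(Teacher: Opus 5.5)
\paragraph{The gap: invertibility of $G_2$ in step 4.}
The gap is in step 4, at the point you yourself flagged. Replacing $W$ by $\rho W$ does not preserve the prescribed jets. Consequently $\Theta^{-1}\begin{bmatrix}\rho W\\ I_n\end{bmatrix}$ has poles at the nodes $\alpha_j$, and the analytic pair $(G_1,G_2)$ you need no longer exists. Moreover, a maximum principle for $G_2^{-1}$ presupposes that $G_2^{-1}$ is analytic in $\mathbb D$, which is exactly what has to be proved.

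No perturbation is needed. On the circle, $G_1^*G_1-G_2^*G_2=W^*W-I_n\le0$. So $G_2(\zeta)v=0$ forces $G_1(\zeta)v=0$, and then $\begin{bmatrix}Wv\\ v\end{bmatrix}=\Theta\begin{bmatrix}G_1v\\ G_2v\end{bmatrix}=0$, i.e.\ $v=0$. Thus $G_2$ is invertible on the circle and $\lVert G_1G_2^{-1}\rVert\le1$ there. The second block row of $\Theta\begin{bmatrix}G_1\\ G_2\end{bmatrix}=\begin{bmatrix}W\\ I_n\end{bmatrix}$ gives, on the circle,
\[
\Theta_{22}^{-1}=\bigl(I_n+\Theta_{22}^{-1}\Theta_{21}G_1G_2^{-1}\bigr)G_2 .
\]
Since $\lVert\Theta_{22}^{-1}\Theta_{21}\rVert<1$ by your step 2, the bracket stays invertible under the homotopy that scales its second term to $0$. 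Hence its determinant has winding number zero. As $\det\Theta_{22}^{-1}$ is analytic and zero-free on $\overline{\mathbb D}$, $\det G_2$ also has winding number zero. The argument principle then makes $G_2$ invertible on $\overline{\mathbb D}$.

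It follows that $R=G_1G_2^{-1}$ is analytic and, by the maximum principle, Schur. The identities $\Theta_{21}R+\Theta_{22}=G_2^{-1}$ and $\Theta_{11}R+\Theta_{12}=WG_2^{-1}$ return $W$. If you prefer to pass through a strict case, the correct perturbation is $(1-\varepsilon)W+\varepsilon\Theta_{12}\Theta_{22}^{-1}$. It keeps the jets because the jet conditions are affine, and it is strictly contractive because the central interpolant is; $\rho W$ has neither property.

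\paragraph{Smaller corrections and comparison with the paper.}
For this Lagrange--Sylvester stacking the tangential condition is $\oint(\zeta I_d-T)^{-1}\bigl(XW(\zeta)-Y\bigr)\,d\zeta=0$. Your version with $X-YW(\zeta)$ would encode $W_0W(\alpha)=I_n$ at a simple node. Also, the analyticity of $(\zeta I_d-T)^{-1}[\,X\;-Y\,]\Theta(\zeta)$ in step 3 needs the Stein equation, through $\Pi-T\Pi T^*=(\zeta I_d-T)\Pi T^*+\Pi(I_d-\zeta T^*)$; the formula for $\Theta$ alone is not enough.

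Apart from these points, your kernel identity (which checks out), the unimodularity of the denominator, and the interpolation property are correct. They go beyond the paper, whose proof cites \cite{BallGohbergRodman1990interpolation} for interpolation, the Schur bound and completeness. The paper itself records only three things: the central solution $R=0$, the inverse parameter $\Theta^{-1}\begin{bmatrix}W\\ I_n\end{bmatrix}$ (your $(G_1,G_2)$), and reality via a real state basis. Once step 4 is repaired as above, your route is a self-contained proof of the non-strict parametrization with $\lVert R\rVert_\infty\le1$.
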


\begin{proof}
The interpolation conditions and the Schur bound are those of the disk
theory in \cite{BallGohbergRodman1990interpolation}; they are not
re-proved.  The remaining points are only the present conventions.  The choice $R=0$
produces the explicit interpolant $W=\Theta_{12}\Theta_{22}^{-1}$.  On the
open set where the rational matrices are nonsingular, the inverse parameter
\begin{equation*}
\begin{bmatrix}H_1\\H_2\end{bmatrix}
=\Theta^{-1}\begin{bmatrix}W\\I_n\end{bmatrix},\qquad
R=H_1H_2^{-1}
\end{equation*}
is rational; the cited theorem makes $R$ Schur, so apparent poles in the
closed disk are removable.  A real state basis makes $\Theta$ real and the
inverse transform is unique, so a real interpolant has a real $R$.
\end{proof}

The admissible seed records two independent families of $n\times n$ jets.
Lemma~\ref{lem:supp-redheffer} is applied separately to each family.  The
resulting coefficient matrices are the blocks $\Theta_{k,\xi}$ in the atlas.

\section{Identity-jet covering}
\label{app:atlas}

This appendix proves the covering \eqref{eq:E-atlas}.  The polynomials
$b$ and $\beta$, the identity-jet writing $E-I_n=bG$, and the enumerated
centers $E_\nu=I_n+bG_\nu$ are those of Section~\ref{sec:atlas}; they are
not re-derived.  Relative to $n=2$, only the matrix size of $E$, $G$, and
$R_0$ changes.  For later reference,
\begin{equation*}
b(z)=\prod_j(z-\alpha_j)^{m_j},
\end{equation*}
\begin{equation*}
\beta(z)=\eta\prod_j
\left(\frac{z-\alpha_j}{1-\bar\alpha_jz}\right)^{m_j},
\qquad \lvert\eta\rvert=1,
\end{equation*}
with $\eta$ chosen so that $\beta$ has real coefficients, and
\begin{equation}
E_\nu=I_n+bG_\nu\in\mathrm{GL}_n(\mathcal R)
\label{eq:atlas-centers}
\end{equation}
as in Section~\ref{sec:atlas}.

\subsection{A standard inverse Cayley chart}

\begin{lem}[Standard inverse Cayley]\label{lem:std-inverse-cayley}
If $F\in\mathrm{GL}_n(\mathcal R)$ has the prescribed identity jets and
$\lVert F-I_n\rVert_\infty<1$, then there is a unique
$R\in\mathcal R^{n\times n}$ with $\lVert R\rVert_\infty<1$ such that
\begin{equation*}
F=\mathcal C(\beta R).
\end{equation*}
\end{lem}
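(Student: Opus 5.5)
The plan is to invert the Cayley transform explicitly and then divide by the Blaschke product. Since $\lVert F-I_n\rVert_\infty<1$, the constant term $F(z)+I_n = 2I_n+(F(z)-I_n)$ is invertible at every point of a neighborhood of the closed disk: its smallest singular value is at least $2-\lVert F-I_n\rVert_\infty>1$. Hence
\[
S=(F-I_n)(F+I_n)^{-1}
\]
lies in $\mathcal R^{n\times n}$, and the algebraic identity $\mathcal C(S)=F$ holds. This identity is the inverse map recorded after \eqref{eq:car}, and for it we only need $I_n-S=2(F+I_n)^{-1}$ to be invertible. Uniqueness of $R$ follows immediately. If $F=\mathcal C(\beta R)$, then $\beta R=(F-I_n)(F+I_n)^{-1}$ by the same inverse identity. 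Since $\beta\not\equiv0$, $R$ is determined.

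Next I would show that $S$ is divisible by $\beta$ in $\mathcal R$. Because $F$ has identity jets, $F-I_n=bG$ with $G\in\mathcal R^{n\times n}$, so $S=bG(F+I_n)^{-1}$ vanishes to order $m_j$ at each $\alpha_j$. The finite Blaschke product is $\beta=b\,u$, where $u(z)=\eta\prod_j(1-\bar\alpha_jz)^{-m_j}$. Since $\lvert\alpha_j\rvert<1$, both $u$ and $u^{-1}$ are analytic near the closed disk. Therefore
\[
R=\beta^{-1}S=u^{-1}G(F+I_n)^{-1}\in\mathcal R^{n\times n}.
\]
Real coefficients follow because $\beta$, $b$ and $F$ are real.

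Finally, I need the bound $\lVert R\rVert_\infty<1$, and I expect this to be the main obstacle. On the circle $\lvert\beta\rvert=1$, so $\lVert R(\zeta)\rVert=\lVert S(\zeta)\rVert$. The maximum principle for $H^\infty$ matrix functions then gives $\lVert R\rVert_\infty=\sup_{\lvert\zeta\rvert=1}\lVert S(\zeta)\rVert$. It therefore suffices to prove $\lVert S(\zeta)\rVert<1$ pointwise on the circle, and compactness upgrades this to a uniform strict bound.

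Put $\Delta=F-I_n$ with $\lVert\Delta\rVert\le\delta<1$. Then $S=\Delta(2I_n+\Delta)^{-1}$. For a unit vector $x$, write $y=(2I_n+\Delta)^{-1}x$. Then $\lVert Sx\rVert=\lVert\Delta y\rVert$, while
\[
1=\lVert(2I_n+\Delta)y\rVert\ge(2-\delta)\lVert y\rVert,
\]
so $\lVert y\rVert\le1/(2-\delta)$. Hence
\[
\lVert S\rVert\le\frac{\delta}{2-\delta}<1,
\]
because $\delta<1$.

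The first thing to check is that the submultiplicative estimate $\lVert S\rVert\le\lVert\Delta\rVert\,\lVert(2I_n+\Delta)^{-1}\rVert$ is correctly applied. If that worked too crudely, the fallback would be the accretivity route: $\operatorname{Re}F>0$ because $\lVert F-I_n\rVert<1$, and the Cayley identity after \eqref{eq:car} then gives $I_n-S^*S>0$ pointwise. With these three steps in place, existence, membership in $\mathcal R$, and the strict Schur bound are all established.
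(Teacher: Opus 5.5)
Your proof is correct and follows the paper's route. You invert the Cayley transform, divide by $\beta$ using the identity-jet factorization $F-I_n=bG$, and use $\lvert\beta\rvert=1$ on the circle together with the maximum principle to keep $\lVert R\rVert_\infty<1$. The one difference is that you bound $(F-I_n)(F+I_n)^{-1}$ directly by $\delta/(2-\delta)$, whereas the paper passes through strict accretivity of $F$ and the Cayley identity; your estimate is a harmless and slightly more quantitative shortcut.
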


\begin{proof}
The bound makes $F$ strictly accretive, so the inverse Cayley map of
Section~\ref{sec:cayley} yields a unique strict Schur $Q$.  Identity jets
allow division by $\beta$, and $\lvert\beta\rvert=1$ on the circle keeps
$\lVert R\rVert_\infty<1$.  This is the chart from that identity, not a new
Cayley theorem.
\end{proof}

The bound $\lVert F-I_n\rVert_\infty<1$ is a convenient neighborhood inside
the strictly accretive identity-jet matrices, not a necessary restriction on
the image of $R\mapsto\mathcal C(\beta R)$.  Directly, if $F$ is strictly
accretive and has identity jets, the same division $R=\beta^{-1}Q$ applies
without a small-norm hypothesis; conversely, $\beta R$ is strict Schur
whenever $\lVert R\rVert_\infty<1$, so $\mathcal C(\beta R)$ is strictly
accretive and, by the elementary identity
\begin{equation}
\mathcal C(\beta R)-I_n
=2\beta R\bigl(I_n-\beta R\bigr)^{-1},
\label{eq:atlas-cayley-id}
\end{equation}
has the prescribed identity jets.  The atlas formula
\eqref{eq:E-atlas} therefore imposes no extra small-norm constraint on its
outputs.  The neighborhood is used only to select a center in the next
lemma.

\subsection{Density of enumerated centers}

\begin{lem}[Standard density]\label{lem:atlas-density}
Let $E\in\mathrm{GL}_n(\mathcal R)$ have the prescribed identity jets and
write $E=I_n+bG$.  There is an index $\nu$ in the enumeration
\eqref{eq:atlas-centers} such that
\begin{equation}
\lVert E_\nu^{-1}E-I_n\rVert_\infty<1.
\label{eq:atlas-margin}
\end{equation}
\end{lem}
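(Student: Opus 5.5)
The plan is to approximate $G$ by an enumerated $G_\nu$ in the $H^\infty$ norm and then control $E_\nu^{-1}E-I_n$ by a Neumann-series estimate. Since $b$ is a polynomial with real coefficients, the identity $E-I_n=bG$ with $E\in\mathrm{GL}_n(\mathcal R)$ forces $G=b^{-1}(E-I_n)$ to be real-rational. It is analytic near the closed disk because the nodes $\alpha_j$ lie in the open disk and $E-I_n$ vanishes to order $m_j$ at each $\alpha_j$. Hence $G\in\mathcal R^{n\times n}$. Put $\varepsilon_0=1/\bigl(2\lVert E^{-1}\rVert_\infty\lVert b\rVert_\infty+2\bigr)$; this is finite and positive because $E^{-1}\in\mathcal R^{n\times n}$ and $b$ is bounded on the closed disk.

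Step one is density. I would approximate $G$ uniformly on the closed disk by a matrix $G'$ in $\mathbb F(z)^{n\times n}\cap\mathcal R^{n\times n}$ with $\lVert G-G'\rVert_\infty<\varepsilon$. The entries of $G$ are analytic on a disk of radius $\rho>1$, so truncated Taylor series at the origin converge uniformly on $\overline{\mathbb D}$; these truncations are real polynomials. I would then perturb their finitely many real coefficients to rational ones, which lie in $\mathbb Q\subset\mathbb F$. Such a polynomial matrix lies in $\mathcal R^{n\times n}$, and the enumeration of $G_\nu$ covers all elements of $\mathbb F(z)^{n\times n}\cap\mathcal R^{n\times n}$, in particular these polynomial matrices.

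Step two is invertibility and the margin. Set $\Delta=E_\nu-E=b(G_\nu-G)$, so that $\lVert\Delta\rVert_\infty\leq\lVert b\rVert_\infty\varepsilon$. Then $E_\nu=E(I_n+E^{-1}\Delta)$, and once $\lVert E^{-1}\rVert_\infty\lVert b\rVert_\infty\varepsilon<1/2$ the Neumann series shows that $E_\nu\in\mathrm{GL}_n(\mathcal R)$. Inverses stay in $\mathcal R$ because the series converges uniformly on a slightly larger closed disk by continuity. This means the candidate passes the retention test. Moreover
\[
E_\nu^{-1}E-I_n=(I_n+E^{-1}\Delta)^{-1}-I_n=-(I_n+E^{-1}\Delta)^{-1}E^{-1}\Delta,
\]
whose norm is at most $2\lVert E^{-1}\rVert_\infty\lVert b\rVert_\infty\varepsilon$. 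Choosing $\varepsilon<\varepsilon_0$ makes this bound less than $1$, which gives \eqref{eq:atlas-margin}.

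The main obstacle is uniform convergence of the power series on the closed disk rather than only on compact subsets of the open disk. I would handle it by using analyticity on a neighborhood $|z|<\rho$ with $\rho>1$, which membership in $\mathcal R$ provides. I would also check the bookkeeping point that $E_\nu^{-1}E$ has identity jets. This is automatic, since both $E_\nu$ and $E$ are congruent to $I_n$ modulo $b$, so this feeds directly into Lemma~\ref{lem:std-inverse-cayley}.
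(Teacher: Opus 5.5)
Your proof is correct and follows the paper's own argument. Like the paper, you approximate $G$ in $H^\infty$ by an $\mathbb F$-rational $G_\nu$, and then use openness of invertibility to get both $E_\nu\in\mathrm{GL}_n(\mathcal R)$ and the margin $\lVert E_\nu^{-1}E-I_n\rVert_\infty<1$.

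The only differences are in execution. You build the approximant by truncating the Taylor series and rounding to rational coefficients. This sidesteps keeping a perturbed denominator zero-free on the closed disk, which the paper's perturbation of the coefficients of $G=N/a$ implicitly requires. You also make the openness step quantitative through the Neumann series.
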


\begin{proof}
Approximate the finitely many coefficients of $G=N/a$ in the dense field
$\mathbb F$.  Unimodularity is open in $H^\infty$, so a close $E_\nu$
remains in $\mathrm{GL}_n(\mathcal R)$ and satisfies
\eqref{eq:atlas-margin}.  This is the standard density of
$\mathbb F$-rational coefficients together with openness of invertibility;
it is not a named interpolation theorem.
\end{proof}

\subsection{The identity-jet covering}

The two lemmas yield the covering \eqref{eq:E-atlas}.

\begin{proof}
Every matrix on the right-hand side of \eqref{eq:E-atlas} has identity jets
and a stable inverse.  Indeed, $\lVert\beta R_0\rVert_\infty
=\lVert R_0\rVert_\infty<1$, so $\mathcal C(\beta R_0)$ is a strictly
accretive element of $\mathrm{GL}_n(\mathcal R)$.  The identity
\eqref{eq:atlas-cayley-id} exhibits a factor $\beta$, hence
$\mathcal C(\beta R_0)$ has identity jets.  Multiplication by a center
$E_\nu$ from \eqref{eq:atlas-centers} preserves both unimodularity and
identity jets.

Conversely, take an arbitrary identity-jet matrix $E=I_n+bG$ in
$\mathrm{GL}_n(\mathcal R)$.  Lemma~\ref{lem:atlas-density} supplies an
enumerated unimodular center $E_\nu$ satisfying \eqref{eq:atlas-margin}.
The ratio $F=E_\nu^{-1}E$ still has identity jets: both factors match
$I_n$ in the truncated jet ring, so the quotient does as well.  Combined
with \eqref{eq:atlas-margin}, Lemma~\ref{lem:std-inverse-cayley} yields a
unique strict real-rational Schur parameter $R_0$ with
$F=\mathcal C(\beta R_0)$.  Thus $E=E_\nu\mathcal C(\beta R_0)$, which is
\eqref{eq:E-atlas}.
\end{proof}

The small-norm condition is used only to place $F$ inside the Cayley chart.
Every strict real-rational Schur parameter $R_0$ is admissible on the
right-hand side of \eqref{eq:E-atlas}.  Surjectivity of the full atlas in
Theorem~\ref{thm:atlas} uses this covering as its identity-jet half; the
remainder of that argument is in the main text.